\newcommand{\Rmnum}[1]{\expandafter\@slowromancap\romannumeral #1@}
\newtheorem{definition}{Definition}[section]
\newtheorem{theorem}{Theorem}
\newtheorem{lemma}[theorem]{Lemma}
\newtheorem{remark}{Remark}
\numberwithin{equation}{section}
\begin{document}
\begin{center}  {\huge\textbf{Evolution of interfaces for the non-linear parabolic p-Laplacian type reaction-diffusion equations. II. Fast diffusion vs. absorption }}
%\par\medskip\bigskip \huge\textsc{Project Description}
\par \medskip\bigskip\end{center}
\begin{center} {\Large\textsc{Ugur G. Abdulla and Roqia Jeli}}
\par \medskip\bigskip\end{center}
\begin{center} {\large\noindent \textsc{Department of Mathematics, Florida Institute of Technology, Melbourne, Florida 32901}}
\par \medskip\bigskip\end{center}

%\title{Evolution of interfaces for the non-linear parabolic p-Laplacian type diffusion equation of non-Newtonian elastic filtration with strong absorption}

%\shorttitle{interfaces for the non-Newtonian elastic filtration with absorption} %%%for recto running head
%\shortauthorlist{U.G. Abdulla \& R. Jeli} %%% for verso running head

%\author{%%%% First author details
%\name{Ugur G. Abdulla$^*$}
%\address{Department of Mathematical Sciences, Florida Institute of Technology, Melbourne, FL 32901\email{$^*$Corresponding author: abdulla@fit.edu}}
%%%%%%% Second author details
%\name{Roqia A. Jeli}
%\address{Department of Mathematical Sciences, Florida Institute of Technology, Melbourne, FL 32901}
%%%%%%%
%}

%\maketitle

\begin{abstract}
{
We present a full classification of the short-time behaviour of the interfaces and local solutions to the nonlinear parabolic $p$-Laplacian type reaction-diffusion equation of non-Newtonian elastic filtration
\[ u_t-\Big(|u_x|^{p-2}u_x\Big)_x+bu^{\beta}=0, \ 1<p<2, \beta >0 \]
If the interface is finite, it may expand, shrink, or remain stationary as a result of the competition of the diffusion and reaction terms near the interface, expressed in terms
 of the parameters $p,\beta, sign~b$, and asymptotics of the initial function near its support. In some range of parameters, strong domination of the diffusion causes infinite speed of propagation and interfaces are absent.
 In all cases with finite interfaces we prove the explicit formula for the interface and the local solution with accuracy up to constant coefficients. We prove explicit asymptotics of the local solution at infinity in all cases with infinite speed of propagation.
 The methods of the proof are based on nonlinear scaling laws, and a barrier technique using special comparison theorems in irregular domains with characteristic boundary curves. A full description of small-time behaviour of the interfaces and local solutions
 near the interfaces for slow diffusion case when $p>2$ is presented in a recent paper {\it Abdulla \& Jeli, Europ. J. Appl. Math. 28, 5(2017), 827-853.}}
 \end{abstract}

%{\bf Key words:} { $p$-Laplacian type reaction-diffusion equations; Evolution of interfaces; Fast diffusion; Nonlinear degenerate parabolic equations.}
%%%% If classification number provided then

%{\bf AMS subject classifications:} 34K30, 35K57, 35Q80,  92D25

\section{Introduction}\label{intro.}
Consider the Cauchy problem (CP) for the $p$-Laplacian type reaction-diffusion equation
\begin{equation}\label{C1}
Lu\equiv u_t-\Big(|u_x|^{p-2}u_x\Big)_x+bu^{\beta}=0, \ x\in \mathbb{R}, 0<t<T,
\end{equation}
with
\begin{equation}\label{C2}
u(x,0)=u_0(x),~~x\in \mathbb{R},
\end{equation}
where~~$1<p<2,\;b\in \mathbb{R},\;\beta>0,\;0<T\leq +\infty$ and $u_0$ is non-negative and continuous. Throughout the paper we assume that either $b\geq 0$ or $b<0$ and $\beta\geq 1$ (see Remark~\ref{nonunique}). \eqref{C1} is called an equation of non-Newtonian elastic filtration with absorption or reaction \cite{Barenblatt1, kalashnikov1987some}. The goal of the paper is to present a full classification of the short-time behavior of the interfaces and local solutions near the interfaces and at infinity in a CP with a compactly supported initial function. The key ingredient of the equation \eqref{C1} is to model competition between the fast diffusion force with infinite speed of propagation property (\cite{Barenblatt1, Barenblatt2}) and absorption or reaction term. Without loss of generality, it is assumed that $\eta(0)=0,$ where
\[\eta(t)=\text{sup}~\{x:u(x,t)>0\}.\]
More precisely, in all cases with finite interfaces we are interested in the short-time behavior of the interface function ~$\eta(t)$~and of the local solution near the interface. In all cases with infinite speed of propagation, we aim to classify the asymptotics of the solution at infinity. We use the notation
\[ f(y)\sim g(y),~~\text{as} \quad y\rightarrow y_o \]
instead of
\[ \lim\limits_{y\rightarrow y_o} \frac{f(y)}{g(y)}=1. \]
Furthermore, unless otherwise stated, we shall assume that 
\begin{equation}\label{C3}
u_0\sim C(-x)_+^{\alpha}~~\text{as}~x\rightarrow 0^-~~~\text{for some}~~~C>0,~\alpha>0,\end{equation}
where $(\cdot)_+=\max(\cdot;0)$. The behaviour of $u_0$~as~$x\rightarrow -\infty$~has no influence on our results.  Accordingly, we may suppose that ~$u_0$~either is bounded or unbounded with growth condition as ~$x\rightarrow -\infty,$ which is suitable for existence, uniqueness, and comparison results. In some cases we will consider the special case
\begin{equation}\label{C4}
u_0(x)=C(-x)_+^{\alpha},\quad x\in \mathbb{R}.\end{equation}
Precisely, that will be done in all cases when the solution to the CP \eqref{C1}, \eqref{C4} is of self-similar form. In these cases our estimations will be global in time. 

A full classification of the small-time behavior of $\eta(t)$ and of the local solution near ~$\eta(t)$ depending on the parameters ~$p,b,\beta, C,$~and~$\alpha$ in the case of slow diffusion ($p>2$) is presented in a recent paper \cite{Abdulla35}. A similar classification for the reaction-diffusion equation
\begin{equation}\label{nonldiff}
u_t-(u^m)_{xx}+bu^\beta=0
\end{equation}
is presented in \cite{Abdulla1} for the slow diffusion case ($m>1$), and in \cite{Abdulla2} for the fast diffusion case ($0<m<1$). %in this paper a similar description is presented for the fast diffusion ($1<p<2$) case; we also give a full description of local solution in cases in which interfaces are absent. %The behavior of $u_0$~as~$x\rightarrow -\infty$~has no influence on our results.  Accordingly, we may suppose that ~$u_0$~either is bounded or unbounded without any conditions  as ~$x\rightarrow -\infty,$ which is suitable for existence, uniqueness, and comparison results (see Section \ref{sec: preliminary results}). 
The semilinear case ($p=2$ in \eqref{C1}) was analyzed in \cite{Grundy1,Grundy2}. It should be noted that as in the case of PDE \eqref{nonldiff}, the semilinear case is a singular limit of the general case. For instance, if $0<\beta<1,\; p-1>\beta,\; C>0,\; \alpha<\frac{p}{p-1-\beta}$, then the interface initially expands and if $p>2$ then \cite{Abdulla35}
\[\eta(t)\sim C_1t^{1/(p-\alpha(p-2))}~~\text{as}~t\rightarrow 0^+,\]
while if $p<2,$ we prove below that
\[\eta(t)\sim C_2t^{(p-1-\beta)/p(1-\beta)}~~\text{as}~t\rightarrow 0^+.\]
Formally, as ~$p\rightarrow 2$ both estimates yield a false result, and from \cite{Grundy2} it follows that if $p=2$,~then 
\[\eta(t)\sim C_3(t~\text{ log}~ 1/t)^{\frac{1}{2}}\]
($C_i, i=\overline{1,3}$~ are positive constants). %The method of this paper are similar to those of \cite{Abdulla35, Abdulla1, Abdulla2}. %As in \cite{Abdulla35}, the results of the recent paper \cite{Abdulla3} on the general theory of initial-boundary value problems for reaction-diffusion equations in non-cylindrical domains with non-smooth and characteristic boundary curves will play a crucial role. 

The mathematical theory of nonlinear p-Laplacian type degenerate parabolic equations is well developed (see \cite{dibe-sv}).
Throughout this paper we shall follow the definition of weak solutions and of supersolutions (or subsolutions) of the CP \eqref{C1},\eqref{C2} in the following sense:
\begin{definition}
A measurable function $u\geq 0$ is a weak solution (respectively sub- or supersolution) of the CP \eqref{C1}. \eqref{C2} in $\mathbb{R} \times [0,T]$ if 
\begin{itemize}
\item  $u \in C_{loc}(0,T; L^2_{loc}(\mathbb{R})) \cap L^p_{loc}(0,T; W_{loc}^{1,p}(\mathbb{R})\cap L^{1+\beta}_{loc}(\mathbb{R}))$
\item For $\forall$ subinterval $[t_0, t_1] \subset (0,T]$ and for $\forall \mu_i \in C^1[t_0, t_1], \ i=1,2$ such that $\mu_1(t) < \mu_2(t)$ for $t\in [t_0, t_1]$
\begin{equation}\label{supersub}
\int_{\mu_1(t)}^{\mu_2(t)}u \phi dx \Big |_{t_0}^{t_1}+\int_{t_0}^{t_1}\int_{\mu_1(t)}^{\mu_2(t)}(-u\phi_t+|u_x|^{p-2}u_x\phi_x+bu^\beta \phi )dx dt = 0 \  ( resp.  \leq  or \geq \  0)
\end{equation}
where $\phi \in C_{x,t}^{2,1}(\overline{D})$ is an arbitrary function (respectively nonnegative function) that equals zero when $x=\mu_i(t), t_0\leq t \leq t_1, i=1,2$, and
\[ D=\{(x,t): \mu_1(t)<x<\mu_2(t), t_0< t <t_1\}  \]
\item $\lim\limits_{t\downarrow 0} u(x,t)=u_0(x),~~\text{for all}~~x\in\mathbb{R}$
\end{itemize}
\end{definition}
The questions of existence and uniqueness of initial boundary value problems for \eqref{C1}, comparison theorems, and regularity of weak solutions are known due to \cite{dibe1,  dibe2, dibe3, dibe-sv,est-vazquez,kalashnikov2,kalashnikov3, tsutsumi} etc. Qualitative properties of free boundaries for the quasilinear degenerate parabolic equations were studied via energy methods in \cite{diaz,ADS}. It is proved in \cite{dibe3} that existence, uniqueness, and comparison theorems are valid for the CP \eqref{C1},\eqref{C2} with $b=0$, $1<p<2$ without any growth condition on the initial function $u_0$ at infinity. In particular, $\alpha >0$ is arbitrary in \eqref{C4}. The same results are true of the CP  \eqref{C1},\eqref{C2}  with $b>0$ (\cite{dibe-sv}). This follows from the fact that the solution of the CP \eqref{C1}, \eqref{C2} with $b=0$ is a supersolution of the CP with $b>0$, and hence it becomes a global locally bounded uniform upper bound for the increasing sequence of approximating bounded solutions of the CP with $b>0$.

The organization of the paper is as follows: In Section \ref{sec: description of the main results}, we outline the main results. Section \ref{sec: details of the main results} describes some further technical details of the main results. In Section \ref{sec: preliminary results}, we then apply scale of variables methods for some preliminary estimations which are necessary for using our barrier technique. Finally in Section \ref{sec: proofs of the main results.} we prove the results of Section \ref{sec: description of the main results}. 
To avoid difficulties for the reader we give explicit values of some of constants which appear in Sections \ref{sec: description of the main results}, \ref{sec: details of the main results}  and \ref{sec: proofs of the main results.} in the appendix.
\begin{remark}\label{nonunique} We are not considering the case $b<0, 0<\beta<1$ in this paper due to the fact that in general, uniqueness and comparison theorems don't hold for the solutions of the Cauchy problem \eqref{C1},\eqref{C2}. Although the methods of this paper can be applied to identify asymptotic properties of the minimal solution at infinity in this case. The methods of this paper can be applied to similar problem for the non-homogeneous reaction-diffusion equations with space and time variable dependent power type coefficients (\cite{shmarev2015interfaces}). It should be also mentioned that modification of the method can be applied to radially symmetric solutions of the multidimensional $p$-Laplacian type reaction-diffusion equation
\[ u_t=div(|\nabla u|^{p-2}\nabla u)+bu^\beta. \]
\end{remark}

\section{Main Results}\label{sec: description of the main results}
Throughout this section we assume that $u$ is a unique weak solution of the CP \eqref{C1}-\eqref{C3}. There are five different subcases, as shown in Fig. \ref{fig:1}. The main results are outlined below in Theorems 1, 2, 3, 4 and 5 corresponding directly to the cases $I,II,III.IV$ and $V$ in Fig. \ref{fig:1}. 

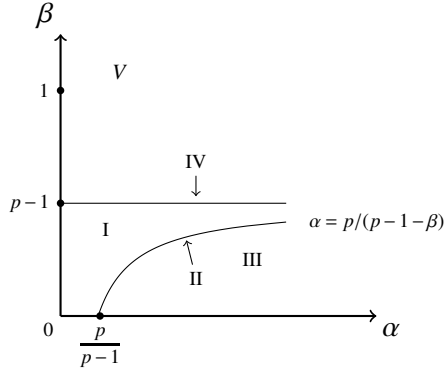
\begin{figure}
		\begin{center}	
		{\scalefont{0.75}
\begin{tikzpicture}[xscale=1.5, yscale=0.5]
\draw[->, thick] (0,0)--(2.8,0);
\draw[->,thick] (0,0)--(0,7.5);
\node[below left] at (0,0) {$0$};
\node[fill, shape=circle, label=180:{$p-1$}, inner sep=1pt] at (0,3) {};
\node[fill, shape=circle, label=180:{$1$}, inner sep=1pt] at (0,6) {};
\node[fill, shape=circle, label=-90:{$\displaystyle  \frac{p}{p-1}$}, inner sep=1pt] at (.35,0) {};
\node[left] at (0,8) {\scalefont{1.5} $\beta$};
\node[below] at (2.9,0) {\scalefont{1.5} $\alpha$};
\draw[domain=0:2.5, smooth] plot ({1/(3-\x)},\x);
\node at (2.8,2.5) {$\alpha =p/(p-1-\beta)$};
\node at (0.5,6.5) {\scalefont{1.1} $V$};
\node (1) at (0.4,2.3) {I};
\node (2) at (1.2,1) {II};
\node (3) at (1.7,1.5) {III};
\node (4) at (1.2,4.1) {IV};
\node (*) at (1.2,3) {};
\node (**) at (.71,1.5) {};
\node (***) at (1.1,2.2) {};
\draw[ ->][ left]
  (4) edge (*);
   \draw[ ->][ left]
  (2) edge (***);
\draw (0,3)--(2,3);
\draw ( (0.5,1) {};
\end{tikzpicture}
}
		\caption{Classification of different cases in the ($\alpha$,$\beta$) plane for interface development in problem \eqref{C1}-\eqref{C4}.}
		\label{fig:1}
		\end{center}
		\end{figure}

%\begin{figure}
%    \centering
%    \includegraphics[width = 0.7\textwidth]{Classification1.png}
%    \caption{Classification of different cases in the ($\alpha$,$\beta$) plane for interface development in problem \eqref{C1}-\eqref{C4}.}\label{fig:1}
%\end{figure}
\begin{theorem}\label{diffusiondominates'}
Let $b>0, 0<\beta<p-1,\; 0<\alpha<p/(p-1-\beta)$. Then, the interface initially expands and for some positive $\delta>0$ 
%\begin{subequations}\label{C3'ab}
\begin{equation}\label{C3'b}
\zeta_1t^{(p-1-\beta)/p(1-\beta)}\leq \eta(t)\leq \zeta_2t^{(p-1-\beta)/p(1-\beta)},\; 0<t\leq \delta,
\end{equation} %\end{subequations}
(see Appendix for explicit values of $\zeta_1,\;\zeta_2$). Moreover, for arbitrary $\rho\in \mathbb{R}$, there exists a positive number $f(\rho)$ depending on $C,p$ and $\alpha$ such that 
\begin{equation}\label{C1'}
u(\xi_{\rho}(t),t) \sim f(\rho) t^{\frac{\alpha}{p+\alpha (2-p)}}~ \text{as} \quad t\rightarrow 0^+
\end{equation}
where $\xi_{\rho}(t)=\rho t^{1/(p+\alpha(2-p))}$.
\end{theorem}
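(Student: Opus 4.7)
The plan is to combine a two-scale self-similarity analysis with an explicit barrier argument. The ansatz $u=t^{1/(1-\beta)}G(\xi)$ with $\xi=xt^{-(p-1-\beta)/(p(1-\beta))}$ is the unique self-similar form in which the time-derivative, $p$-Laplacian diffusion, and absorption terms all balance, and it already dictates the interface exponent in \eqref{C3'b}. On the other hand, the initial datum \eqref{C3} is preserved by the distinct \emph{bulk} scaling $v=t^{\alpha/(p+\alpha(2-p))}F(xt^{-1/(p+\alpha(2-p))})$ of the pure fast-diffusion equation $v_t=(|v_x|^{p-2}v_x)_x$. A direct calculation shows that the rescaled family $u_\lambda(x,t):=\lambda^{-\alpha/(p+\alpha(2-p))}u(\lambda^{1/(p+\alpha(2-p))}x,\lambda t)$ satisfies the same PDE as $u$ but with absorption coefficient $b\lambda^{1-\alpha(1-\beta)/(p+\alpha(2-p))}$, and the exponent $1-\alpha(1-\beta)/(p+\alpha(2-p))$ is strictly positive \emph{precisely} under the hypothesis $\alpha<p/(p-1-\beta)$. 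This is the quantitative form of the statement ``diffusion dominates absorption on the bulk scale,'' and it is also exactly the condition under which the bulk scale $t^{1/(p+\alpha(2-p))}$ is finer than the interface scale $t^{(p-1-\beta)/(p(1-\beta))}$, so the points $\xi_\rho(t)$ lie well inside the support of $u$ for small $t$.

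For the interface upper bound I would construct the explicit supersolution
\[
\bar u(x,t)=A\bigl(\zeta_2 t^{(p-1-\beta)/(p(1-\beta))}-x\bigr)_+^{p/(p-1-\beta)},
\]
exploiting that $\kappa:=p/(p-1-\beta)$ is the unique exponent making $(|\bar u_x|^{p-2}\bar u_x)_x$ and $b\bar u^\beta$ of the same spatial order, with the coefficient identity $(A\kappa)^{p-1}(\kappa-1)(p-1)=bA^\beta$ fixing a unique $A$ for which these two terms cancel identically. The residue $L\bar u=\bar u_t$ is then positive and of the correct time order $t^{\beta/(1-\beta)}$, so choosing $\zeta_2$ large provides the sign margin needed to control $u$ from above along the lateral matching curve. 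Combined with preliminary estimates from Section~\ref{sec: preliminary results} bounding $u$ along a curve $x=-Kt^{1/(p+\alpha(2-p))}$ with $K$ large, the comparison principle in the irregular domain bounded on the right by the characteristic $x=\bar\eta(t)$ yields $u\leq\bar u$ and in particular $\eta(t)\leq\zeta_2 t^{(p-1-\beta)/(p(1-\beta))}$. The lower interface bound is obtained symmetrically by a subsolution of the same form, with coefficients adjusted so that the diffusion-minus-absorption term exceeds $\underline u_t$ in absolute value, matched from the bulk side to the lower preliminary estimate on $u$.

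The local asymptotic \eqref{C1'} is then extracted by a scaling-compactness argument applied to $\{u_\lambda\}_{\lambda\downarrow 0}$. The pure-diffusion upper bound $u\leq v$, which holds because $b>0$ makes $u$ a subsolution of the $p$-Laplacian alone, together with the interior $C^{1,\sigma}$ regularity theory for fast $p$-Laplacian equations \cite{dibe-sv}, makes this family precompact in $C_{\mathrm{loc}}(\mathbb{R}\times(0,1])$. By the first-paragraph computation, every subsequential limit $u_\ast$ solves $(u_\ast)_t=(|(u_\ast)_x|^{p-2}(u_\ast)_x)_x$ with initial datum $C(-x)_+^\alpha$, and the absence of any growth restriction at $-\infty$ in the uniqueness result of \cite{dibe3} forces $u_\ast$ to be the unique self-similar solution $v(x,t)=t^{\alpha/(p+\alpha(2-p))}F(xt^{-1/(p+\alpha(2-p))})$. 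Evaluating at $(x,t)=(\rho,1)$ then gives $\lambda^{-\alpha/(p+\alpha(2-p))}u(\xi_\rho(\lambda),\lambda)\to F(\rho)=:f(\rho)$, which is \eqref{C1'}.

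The main obstacle I anticipate is the rigorous verification of $L\bar u\geq 0$ and of the corresponding sub-solution inequality uniformly up to the characteristic curves $x=\bar\eta(t),\,x=\underline\eta(t)$, where the $p$-Laplacian operator is singular because $\bar u_x$ vanishes, together with the justification of the comparison principle in domains bounded on one side by such a characteristic. This is precisely the ``barrier technique using special comparison theorems in irregular domains with characteristic boundary curves'' advertised in the abstract, and it is the main PDE-technical engine of the argument.
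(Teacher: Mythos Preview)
Your scaling-compactness argument for \eqref{C1'} and your subsolution construction for the lower interface bound are essentially those of the paper (the paper packages the former as Lemma~\ref{lemma3.2}, and matches the subsolution at $x=0$ via \eqref{C1'''}, which is the $\rho=0$ instance of the bulk asymptotics you invoke). The supersolution \emph{form} you write down is also exactly the paper's barrier, with $A=C_*$.

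The genuine gap is in your matching for the \emph{upper} bound. You propose to compare $\bar u$ and $u$ in a domain whose left lateral boundary is the bulk-scale curve $x=-Kt^{1/(p+\alpha(2-p))}$, using the preliminary estimate $u\sim f(-K)t^{\alpha/(p+\alpha(2-p))}$ there. But on that curve the self-similar coordinate $\zeta=xt^{-(p-1-\beta)/(p(1-\beta))}\to 0$, so your barrier satisfies $\bar u\sim C_*\zeta_2^{p/(p-1-\beta)}\,t^{1/(1-\beta)}$. Under the hypothesis $\alpha<p/(p-1-\beta)$ one has $1/(1-\beta)>\alpha/(p+\alpha(2-p))$, hence $\bar u\ll u$ on that curve as $t\to 0^+$ no matter how large you take $\zeta_2$: the mismatch is in the power of $t$, not in a constant that $\zeta_2$ could absorb. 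The boundary inequality $\bar u\geq u$ therefore fails, and the comparison cannot be closed from the bulk side.

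The paper circumvents this with a two-step argument. First it records the global pure-diffusion bound $u(x,t)\leq D\,t^{1/(2-p)}x^{p/(p-2)}$ for $x>0$ (you mention $u\leq v$ only in the compactness step, but this is where it is really needed). Then it applies the comparison in the \emph{interface-scale} region $G_{\ell_0,\delta}=\{x>\ell_0 t^{(p-1-\beta)/(p(1-\beta))}\}$: along $x=\ell_0 t^{(p-1-\beta)/(p(1-\beta))}$ both $u$ and $\bar u$ scale like $t^{1/(1-\beta)}$, and the explicit choice of $\ell_0$ and $\zeta_2$ in the Appendix is precisely what makes $D\ell_0^{p/(p-2)}=C_*(\zeta_2-\ell_0)^{p/(p-1-\beta)}$, i.e.\ \eqref{C10'''}. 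So the correct lateral curve for the supersolution lies to the \emph{right} of the origin, at the interface scale, and the pure-diffusion bound is the indispensable intermediate step that lets you get there.
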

\begin{theorem}\label{diffusionbalance'}
Let ~$b>0, 0<\beta <p-1,\; \alpha=p/(p-1-\beta)$ and 
\begin{equation}\label{C*}
C_*=\Big[(b\left|p-1-\beta\right|^p)/((1+\beta)(p-1)p^{p-1})\Big]^{1/(p-1-\beta)}.
\end{equation}
Then the interface expands or shrinks accordingly as $C>C_*$ or $C<C_*$ and
\begin{equation}\label{C8'}
\eta(t) \sim\zeta_*t^{(p-1-\beta)/p(1-\beta)},\qquad \text{as} \ t\rightarrow 0^+,\end{equation} where $\zeta_* \lessgtr 0$ if $C\lessgtr C_*$, and for arbitrary $\rho<\zeta_*$ there exists $f_1(\rho)>0$ satisfies \begin{equation}\label{C9'}
u(\zeta_{\rho}(t),t)\sim f_1(\rho)t^{1/(1-\beta)}\qquad \text{as} \quad t\rightarrow 0^+,
\end{equation}
where $\zeta_{\rho}(t)=\rho t^{\frac{p-1-\beta}{p(1-\beta)}}$.
 \end{theorem}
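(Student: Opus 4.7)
The plan is to exploit the scaling invariance of the critical case. Because $\alpha = p/(p-1-\beta)$ makes the natural length scales of diffusion and of absorption coincide, the PDE admits a one-parameter family of self-similar solutions
\[ u(x,t) = t^{1/(1-\beta)} F(\xi), \qquad \xi = x t^{-\kappa}, \qquad \kappa = \frac{p-1-\beta}{p(1-\beta)}, \]
whose profile $F = F_C$ solves the ODE
\[ \frac{1}{1-\beta} F - \kappa \xi F' - (|F'|^{p-2} F')' + b F^{\beta} = 0, \qquad F(\xi) \sim C(-\xi)^{\alpha} \text{ as } \xi \to -\infty. \]
The asymptotic condition at $-\infty$ is exactly what is needed for the self-similar scaling to preserve the initial datum $C(-x)_+^{\alpha}$; thus if $u_0(x)=C(-x)_+^{\alpha}$ then $u$ is this self-similar solution, and \eqref{C8'}--\eqref{C9'} follow from $\eta(t) = \zeta(C) t^{\kappa}$ and $u(\rho t^{\kappa}, t) = t^{1/(1-\beta)} F_C(\rho)$, identifying $\zeta_* = \zeta(C)$ and $f_1(\rho) = F_C(\rho)$.

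The identification of $C_*$ comes from a direct algebraic observation: on $\{x < 0\}$ the stationary ansatz $u(x,t) \equiv A(-x)^{\alpha}$ reduces \eqref{C1} to $b A^{\beta} = \alpha^{p-1}(\alpha-1)(p-1) A^{p-1}$, whose unique positive root is exactly the $C_*$ in \eqref{C*}. Hence $u \equiv C_*(-x)_+^{\alpha}$ is an explicit time-independent weak solution, corresponding to $F_{C_*}(\xi) = C_*(-\xi)_+^{\alpha}$ and to $\zeta(C_*) = 0$. Monotonicity of $C \mapsto \zeta(C)$ is obtained by transferring the question to the PDE: two self-similar solutions with ordered initial data $C_1(-x)_+^{\alpha} \ge C_2(-x)_+^{\alpha}$ are ordered by the comparison principle, which forces $\zeta(C_1) \ge \zeta(C_2)$ and yields the sign statement $\zeta_* \lessgtr 0$ according as $C \lessgtr C_*$. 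For the general initial datum \eqref{C3}, I use a sandwich argument: given $\varepsilon > 0$, the preliminary scaling estimates of Section~\ref{sec: preliminary results} permit bounding $u_0$ above and below on a neighbourhood of the origin by $(C \pm \varepsilon)(-x)_+^{\alpha}$, extended in a controlled way outside, and the special comparison theorem in irregular domains bounded by the characteristic curves $x = \rho t^{\kappa}$ then traps $u$ between the explicit self-similar solutions corresponding to $C \pm \varepsilon$. Taking first $t \to 0^+$ and then $\varepsilon \to 0$, the continuity of $\zeta(C)$ and $F_C(\rho)$ in $C$ produces \eqref{C8'} and \eqref{C9'}.

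The main obstacle is the rigorous construction of the profile $F_C$: establishing existence and uniqueness of an ODE solution with the prescribed non-Cauchy asymptotics at $-\infty$ and a finite interface $\zeta(C)$, together with the vanishing rate $F_C(\xi) \sim C_*(\zeta(C) - \xi)_+^{\alpha}$ as $\xi \uparrow \zeta(C)$. The latter is obtained by matching dominant powers near the interface: the convective term $\kappa \xi F'$ has order $(\zeta - \xi)^{\alpha - 1}$, while $b F^{\beta}$ and $(|F'|^{p-2} F')'$ both have order $(\zeta - \xi)^{\alpha \beta}$ because $\alpha(p-1-\beta) = p$, and since $\beta(p-1) < 1$ the convection is subdominant, so the leading balance forces the amplitude to equal $C_*$. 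The degeneracy of $|F'|^{p-2}$ at zeros of $F'$ together with the non-Cauchy condition at $-\infty$ prevent a purely ODE-theoretic argument; instead these properties must be transferred from the PDE via the comparison theorem restricted to the class of self-similar solutions.
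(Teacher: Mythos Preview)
Your overall framework---self-similar form, identification of $C_*$ as the stationary amplitude, and a sandwich between the self-similar solutions with coefficients $C\pm\varepsilon$---is the same skeleton the paper uses. But two steps in your proposal are genuine gaps, and the paper handles both by a different mechanism.

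First, monotonicity of $C\mapsto\zeta(C)$ via comparison gives only $\zeta(C)\ge 0$ for $C>C_*$ and $\zeta(C)\le 0$ for $C<C_*$; it does not give the strict sign the theorem asserts. The paper obtains strictness by constructing an explicit moving subsolution $g(x,t)=C_1(-x+t)_+^{p/(p-1-\beta)}$ with $C_1\in(C_*,C)$ (Lemma~\ref{lemma3.3}), which forces $u(0,t_0)>0$ and hence $f_1(0)=A_1>0$, so $\zeta_*>0$. For $C<C_*$ the strict negativity of $\zeta_*$ comes from an explicit supersolution of the same algebraic form, not from comparison with the stationary profile alone.

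Second, and more seriously, your passage from the sandwich $\zeta(C-\varepsilon)\le \liminf \eta(t)t^{-\kappa}\le\limsup\le\zeta(C+\varepsilon)$ to \eqref{C8'} requires continuity of $C\mapsto\zeta(C)$, which you assert but do not prove; this is not obvious from the ODE (where, as you note, the degenerate non-Cauchy problem resists direct analysis) and does not follow from mere monotonicity. The paper never needs this continuity. Instead it works entirely with explicit barrier profiles $f_1(\zeta)=C_0(\zeta_0-\zeta)_+^{p/(p-1-\beta)}$: plugging into the self-similar operator $\mathcal{L}^0$ (see \eqref{C3'''ab}, \eqref{C4'''}) yields an algebraic expression whose sign is controlled by choosing $C_0,\zeta_0$ from a short list of computable constants ($C_*,C_2,C_3,\zeta_3,\zeta_4,\zeta_5,\zeta_6$ in the Appendix). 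These barriers are then compared with $u$ in domains bounded by curves $x=\ell t^\kappa$, using the boundary information supplied by Lemma~\ref{lemma3.4} (the localized version of \eqref{C7'}). This produces the two-sided estimate \eqref{C6'ab} with explicit $\zeta',\zeta''$ directly, and the asymptotics \eqref{C8'}, \eqref{C9'} for general $u_0$ come from Lemma~\ref{lemma3.4} rather than from any limit in $\varepsilon$. In short: where you invoke an unproved continuity of the interface map, the paper substitutes a finite family of hand-built algebraic sub/supersolutions.
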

\begin{theorem}\label{reactiondominates'}
Let $b>0,\;0<\beta<p-1,\;\alpha >p/(p-1-\beta)$. Then interface shrinks and
\begin{equation}\label{C10'}
\eta(t)\sim -\ell_* t^{1/\alpha(1-\beta)}\qquad\text{as}~t\rightarrow 0^+,
\end{equation}
where~$\ell_*=C^{-1/\alpha}(b(1-\beta))^{1/\alpha(1-\beta)}.$~For arbitrary $\ell>\ell_*$, we have \begin{equation}\label{C11'}
u(\eta_l(t),t)\sim [C^{1-\beta}\ell^{\alpha(1-\beta)}-b(1-\beta)]^{1/(1-\beta)}t^{1/(1-\beta)}\qquad\text{as}~t\rightarrow 0^+,
\end{equation}
where $\eta_l(t)=-lt^{1/\alpha(1-\beta)}.$
\end{theorem}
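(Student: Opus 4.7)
The heuristic driving the theorem is that when $\alpha>p/(p-1-\beta)$ the initial function $u_0\sim C(-x)_+^\alpha$ is so flat near $x=0^-$ that the absorption $bu^\beta$ overpowers the diffusion $(|u_x|^{p-2}u_x)_x$ in a neighbourhood of the interface. If one drops the diffusion term altogether, the PDE reduces to the ODE $v_t+bv^\beta=0$ at each fixed $x$, which integrates to
\[
v(x,t)=\bigl(u_0(x)^{1-\beta}-b(1-\beta)t\bigr)_+^{1/(1-\beta)}.
\]
Inserting $u_0\sim C(-x)_+^\alpha$ reads off both the interface formula \eqref{C10'} and the pointwise asymptotic \eqref{C11'} immediately. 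The whole business of the proof is to upgrade this heuristic by sandwiching $u$ between two barriers of the above explicit form.

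The plan is to introduce, for each $\varepsilon>0$, the family
\[
w_\varepsilon^{\pm}(x,t)=\Bigl((C\pm\varepsilon)^{1-\beta}(-x)_+^{\alpha(1-\beta)}-b(1\mp\mu_\varepsilon)(1-\beta)t\Bigr)_+^{1/(1-\beta)},
\]
with $\mu_\varepsilon\downarrow 0$ as $\varepsilon\downarrow 0$ to be chosen. On its positivity set each $w_\varepsilon^{\pm}$ satisfies $\partial_tw+b(1\mp\mu_\varepsilon)w^\beta=0$ identically, hence
\[
Lw_\varepsilon^{\pm}=\pm b\mu_\varepsilon(w_\varepsilon^{\pm})^\beta-\bigl(|(w_\varepsilon^{\pm})_x|^{p-2}(w_\varepsilon^{\pm})_x\bigr)_x.
\]
The algebraic pivot is that $\alpha(1-\beta)>1$, which is an immediate consequence of $\alpha>p/(p-1-\beta)$ combined with $\beta<p-1<1$ (so $\beta(p-1)<(p-1)^2<1$). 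This makes $(w_\varepsilon^{\pm})_x$ vanish as one approaches the free boundary and, after a chain-rule computation followed by a scaling check along the natural rays $x=-\ell t^{1/\alpha(1-\beta)}$, leads to a pointwise bound
\[
\bigl|\bigl(|(w_\varepsilon^{\pm})_x|^{p-2}(w_\varepsilon^{\pm})_x\bigr)_x\bigr|\le K(\varepsilon)\,t^{\theta}\,(w_\varepsilon^{\pm})^\beta,\qquad \theta=\frac{(p-1-\beta)\alpha-p}{\alpha(1-\beta)}>0,
\]
on the positivity set. For $t\le t_0(\varepsilon)$ the diffusion error is then dominated by the sign-favourable absorption perturbation $b\mu_\varepsilon(w_\varepsilon^{\pm})^\beta$, yielding $Lw_\varepsilon^{+}\ge 0$ and $Lw_\varepsilon^{-}\le 0$.

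Finally, \eqref{C3} provides $(C-\varepsilon)(-x)_+^\alpha\le u_0(x)\le(C+\varepsilon)(-x)_+^\alpha$ on a small left-neighbourhood of $0$; the region away from this neighbourhood is controlled by the cheap pointwise estimates on $u$ derived by the scale-of-variables arguments in Section \ref{sec: preliminary results}. Applying the irregular-domain comparison theorems from that section on a domain whose lateral boundaries are the free boundaries of $w_\varepsilon^\pm$, one obtains $w_\varepsilon^{-}(x,t)\le u(x,t)\le w_\varepsilon^{+}(x,t)$ for $t\le t_0(\varepsilon)$. Reading off the free boundaries and the values along the rays $x=-\ell t^{1/\alpha(1-\beta)}$, then letting $\varepsilon\downarrow 0$, delivers \eqref{C10'} and \eqref{C11'}. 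The main obstacle I expect is the pointwise diffusion estimate above: one must verify that the chain-rule computation preserves the strict positivity of $\theta$ uniformly up to the free boundary and that $K(\varepsilon)$ does not blow up too quickly in $\varepsilon$. Any failure of either would force refining the single-parameter ansatz by an additive boundary-layer correction near the free boundary rather than relying on the perturbation used here.
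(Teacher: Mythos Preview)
Your heuristic and the subsolution half are fine: since both terms of $(|w_x|^{p-2}w_x)_x$ come out positive on the positivity set, $Lw_\varepsilon^{-}\le0$ holds automatically, giving $u\ge w_\varepsilon^{-}$ and hence $\liminf_{t\to0}\eta(t)t^{-1/\alpha(1-\beta)}\ge-\ell_*$ together with the lower half of \eqref{C11'}. The gap is the supersolution, and it is precisely the obstacle you flag at the end --- except that it definitely fails rather than merely might. With $\phi(x)=A\alpha(-x)^{\alpha(1-\beta)-1}$ one has $w_x=-\phi w^\beta$, hence
\[
(|w_x|^{p-2}w_x)_x=-(p-1)\phi^{p-2}\phi'\,w^{\beta(p-1)}+\beta(p-1)\phi^{p}\,w^{\beta p-1},
\]
and near the free boundary the second term behaves like $w^{\beta p-1}$. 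For $Lw_\varepsilon^{+}\ge0$ you would need $w^{\beta p-1}\lesssim w^\beta$, i.e.\ $w^{\beta(p-1)-1}$ bounded as $w\to0$; but $\beta(p-1)<1$ (the very inequality you invoked to get $\alpha(1-\beta)>1$), so this blows up. Your exponent $\theta>0$ is correct along rays with $\ell$ bounded away from $\ell_*$, but the estimate is not uniform up to the free boundary, and no choice of $\mu_\varepsilon$ repairs it; the ``boundary-layer correction'' you anticipate is not a refinement but a genuinely different barrier.

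The paper separates the two conclusions. Formula \eqref{C11'} is obtained in Lemma~\ref{lemma3.5} by nonlinear rescaling rather than barriers: under $u_k(x,t)=k\,u(k^{-1/\alpha}x,k^{\beta-1}t)$ the diffusion term acquires the factor $k^{(p-\alpha(p-1-\beta))/\alpha}\to0$ (exactly because $\alpha>p/(p-1-\beta)$), so the limit solves $\bar u_t+b\bar u^\beta=0$ with the same initial data and \eqref{C11'} follows. For \eqref{C10'} the paper then quotes the barriers of \cite{Abdulla35}; the working supersolution there is not of your ODE shape but of the equation-adapted form $C_0(\eta_0(t)-x)_+^{p/(p-1-\beta)}$, whose exponent makes diffusion and absorption balance at the free edge (cf.\ the role of $C_*$ in \eqref{C*}), so no blow-up occurs. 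The lateral boundary data for that local comparison is supplied by \eqref{C11'} along a ray $x=-\ell t^{1/\alpha(1-\beta)}$ with $\ell$ slightly larger than $\ell_*$.
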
 
\begin{theorem}\label{infinitespeed}
 Let $b>0,\; 0<\beta=p-1<1,\; \alpha>0$. Then there is an infinite speed of propagation %and  \eqref{C1'}, is valid (see Lemma \ref{lemma3.2}). 
and $\forall~\epsilon>0,\; \exists~ \delta=\delta(\epsilon)>0$ such that 
\begin{equation}\label{C12'}
t^{1/(2-p)}\phi(x)\leq u(x,t) \leq  (t+\epsilon)^{1/(2-p)}\phi(x)\qquad \text{for}\;\;0<x<\infty,\;0\leq t\leq \delta_{\epsilon},\end{equation}
where $\phi(x)$ solves the ODE problem
\begin{subequations}\label{C13'ab}
\begin{equation}\label{C13'a}
(|\phi'(x)|^{p-2}\phi'(x))'=\frac{1}{2-p}\phi(x)+b\phi^{p-1}(x)\end{equation}
\begin{equation}\label{C13'b}\phi(0)=1,\; \phi(\infty)=0.
\end{equation}\end{subequations}
Solution $u$ satisfies the asymptotic formula  
\begin{equation}\label{logasympu'.} \log u(x,t)\sim-\Big(\frac{b}{p-1}\Big)^{1/p}x\;\; \text{as}\;x\rightarrow+\infty.\end{equation}
\end{theorem}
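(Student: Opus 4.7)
The plan is to prove Theorem~\ref{infinitespeed} in three stages: (i) construct the ODE profile $\phi$ solving \eqref{C13'ab} and determine its exponential decay rate at infinity; (ii) use $\phi$ to build time-separated exact solutions of \eqref{C1} and sandwich $u$ between them by comparison; (iii) transfer the two-sided bound into the logarithmic asymptotic \eqref{logasympu'.}. The key algebraic observation, and the reason $\beta=p-1$ is special, is that the ansatz $u(x,t)=T(t)\phi(x)$ separates: substituting into \eqref{C1} and dividing by $T^{p-1}$ forces $T^{1-p}T'$ to be constant, and choosing that constant equal to $1/(2-p)$ yields the ODE \eqref{C13'a} for $\phi$ and $T(t)=(t+c)^{1/(2-p)}$ for $T$. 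Together with the normalization $\phi(0)=1$ this produces a one-parameter family $V_c(x,t):=(t+c)^{1/(2-p)}\phi(x)$, $c\ge 0$, of exact positive solutions of \eqref{C1}, which are the barriers used throughout the proof.

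For stage~(i), I would establish existence of $\phi$ by a shooting argument from $x=0$: integrate \eqref{C13'a} with $\phi(0)=1$ and $\phi'(0)=-s$ for varying $s>0$, and separate initial slopes into those for which $\phi$ eventually crosses zero in finite $x$ and those for which $\phi$ escapes to infinity. The unique separating value $s_*$ then yields a positive solution decaying to $0$ at infinity. For the decay rate, note that $p-1<1$, so as $\phi\downarrow 0$ the absorption $b\phi^{p-1}$ dominates the linear term $\phi/(2-p)$ on the right-hand side of \eqref{C13'a}; the exponential ansatz $\phi\sim Ae^{-\lambda x}$ then forces $\lambda^{p-1}(p-1)\lambda=b$, i.e.\ $\lambda=(b/(p-1))^{1/p}$. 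A standard perturbation argument, e.g.\ reducing the ODE to a first-order equation for $w=-\phi'/\phi$ and showing $w(x)\to\lambda$, confirms $\log\phi(x)\sim-\lambda x$ as $x\to+\infty$.

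For stage~(ii), the upper bound is obtained by comparing $u$ with $V_\epsilon$ on the strip $\{x>0,\;0\le t\le\delta_\epsilon\}$. At $t=0$ one has $V_\epsilon(x,0)=\epsilon^{1/(2-p)}\phi(x)>0=u_0(x)$ for $x>0$; at $x=0$, continuity of $u$ with $u(0,0)=0$ allows one to shrink $\delta_\epsilon$ so that $u(0,t)<\epsilon^{1/(2-p)}\le V_\epsilon(0,t)$ on $[0,\delta_\epsilon]$; the comparison theorem for \eqref{C1} (\cite{dibe-sv}) then yields $u\le V_\epsilon$. The lower bound is more delicate: I would employ the translated exact solution $W_h(x,t):=t^{1/(2-p)}\phi(x+h)$ for small $h>0$, which solves \eqref{C1} on $\{x>-h\}$. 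Since $u_0(x)\sim C(-x)_+^\alpha$ near $0^-$, one has $u_0(-h)>0$ strictly, and by continuity of $u$ one may pick $\delta_h>0$ so small that $t^{1/(2-p)}\le\tfrac12 u_0(-h)\le u(-h,t)$ for $t\in[0,\delta_h]$. Then $W_h(-h,t)=t^{1/(2-p)}\le u(-h,t)$ on the lateral boundary, $W_h(x,0)=0\le u_0(x)$ at $t=0$, and comparison on $\{x>-h,\;0\le t\le\delta_h\}$ yields $u(x,t)\ge t^{1/(2-p)}\phi(x+h)$ for $x>0$. Passing $h\downarrow 0$ via $\phi(x+h)\uparrow\phi(x)$ recovers the claimed lower bound.

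Stage~(iii) is then routine: from $t^{1/(2-p)}\phi(x)\le u(x,t)\le (t+\epsilon)^{1/(2-p)}\phi(x)$ and $\log\phi(x)/x\to-\lambda$, dividing by $x$ and passing to the limit pinches $\log u(x,t)/x\to-\lambda=-(b/(p-1))^{1/p}$ for each fixed $t\in(0,\delta_\epsilon]$; infinite speed of propagation is immediate from strict positivity of the lower bound for $x>0,\,t>0$. The main obstacle is the $h\downarrow 0$ limit in the lower bound: the naive choice of $\delta_h$ degenerates with $u_0(-h)\to 0$, so uniformity on a fixed time strip is not automatic. The likely remedy is to couple $h=h(\epsilon)$ to the $\epsilon$ already appearing in the upper bound, using the preliminary scaling estimates of Section~\ref{sec: preliminary results} to certify a uniform $\delta_\epsilon>0$ on which both bounds hold simultaneously.
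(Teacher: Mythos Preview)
Your overall strategy---recognizing that $\beta=p-1$ permits exact separation $u=T(t)\phi(x)$ with $T(t)=(t+c)^{1/(2-p)}$, and using the resulting one-parameter family $V_c(x,t)=(t+c)^{1/(2-p)}\phi(x)$ as two-sided barriers---is correct and is exactly the paper's approach; the upper bound argument and stage~(iii) match the paper essentially verbatim. For the profile $\phi$, your shooting/linearization route is legitimate, but the paper proceeds more explicitly: since \eqref{C13'a} is autonomous, multiplying by $\phi'$ and integrating (with $\phi,\phi'\to 0$ at infinity) gives the first integral $(-\phi')^p=\tfrac{b}{p-1}\phi^p+\tfrac{p}{2(p-1)(2-p)}\phi^2$, which separates to yield $\phi=F^{-1}$ with $F$ as in \eqref{integral2}. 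The asymptotic \eqref{logasymp'.} is then extracted by the rescaling $x\mapsto x/\epsilon$ and the substitution $z=-\epsilon\log y$, followed by $\epsilon\to 0$; this also delivers the global bound \eqref{phiglobal} immediately, which your linearization argument does not.

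The one genuine detour is your lower bound. The $h$-translation $W_h$ is unnecessary and is precisely what creates the $\delta_h\to 0$ degeneracy you flag. The paper avoids it entirely by invoking Lemma~\ref{lemma3.2} (case~(b)), which supplies the quantitative boundary asymptotic $u(0,t)\sim A_0\,t^{\alpha/(p+\alpha(2-p))}$ as $t\to 0^+$ with $A_0=f(0)>0$. Since $\tfrac{1}{2-p}>\tfrac{\alpha}{p+\alpha(2-p)}$ for every $\alpha>0$, fixing any $\epsilon_0\in(0,A_0)$ one has
\[
V_0(0,t)=t^{1/(2-p)}\le (A_0-\epsilon_0)\,t^{\alpha/(p+\alpha(2-p))}\le u(0,t)
\]
on a fixed interval $[0,\delta]$, and Lemma~\ref{lemma1} applied to $V_0$ and $u$ directly on $\{x>0,\ 0<t\le\delta\}$ yields the lower half of \eqref{C12'} with a $\delta$ that does not shrink. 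Your proposed remedy---``use the preliminary scaling estimates of Section~\ref{sec: preliminary results}''---is exactly this; but once you have that estimate at $x=0$, the translation by $h$ and the subsequent limit serve no purpose and should be dropped.
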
 
\begin{theorem}\label{infinitespeed'}
Let either $b>0, \beta >p-1$ or $b<0, \beta \geq 1$ or $b=0$ and
\begin{equation}\label{d-constant}
D=\Big(2(p-1)p^{p-1}(2-p)^{1-p}\Big)^{1/(2-p)}.
\end{equation}
 Then there is an infinite speed of propagation and \eqref{C1'} is valid.
If either $b>0,\beta\geq 2/p$ or $b<0,\beta\geq 1$ or $b=0$ then $\exists \delta>0$ such that for $\forall$ fixed $t\in (0,\delta]$
\begin{equation}\label{C17'}
u(x,t)\sim Dt^{1/(2-p)}x^{p/(p-2)} \qquad \text{as}~~x\rightarrow +\infty.
\end{equation}
If $b>0, 1\leq  \beta<2/p$, then 
\begin{equation}\label{C18'}
 \underset{t\rightarrow 0+}\lim  \underset{x\rightarrow +\infty}\lim ut^{1/(p-2)}x^{\frac{p}{2-p}}=D.
\end{equation}
If $b>0,\; p-1<\beta<1$ then $\exists \delta >0$ such that for arbitrary fixed $t\in(0,\delta]$
\begin{equation}\label{C21'}
u(x,t)\sim C_*x^{p/(p-1-\beta)}\qquad \text{as}~~x\rightarrow +\infty.
\end{equation}
\end{theorem}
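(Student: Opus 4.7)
The plan is to split the argument into (a) the infinite-speed assertion together with the small-time self-similar formula \eqref{C1'}, and (b) the four regimes of spatial asymptotics at $x\to+\infty$. The pivotal tool throughout is the rescaling
\begin{equation*}
u^\lambda(x,t) \;=\; \lambda^{-\alpha}\, u(\lambda x,\, \lambda^{p+\alpha(2-p)}\, t),
\end{equation*}
which preserves \eqref{C3} to leading order as $\lambda\to 0^+$ and transforms the PDE into
\begin{equation*}
u^\lambda_t - (|u^\lambda_x|^{p-2}u^\lambda_x)_x + b\,\lambda^{\,p-\alpha(p-1-\beta)}\,(u^\lambda)^\beta = 0.
\end{equation*}
Under each of the hypotheses of Theorem~\ref{infinitespeed'}, the rescaling exponent $p-\alpha(p-1-\beta)$ is strictly positive, since $\beta\geq p-1$ whenever $b\neq 0$; hence the absorption term vanishes as $\lambda\to 0^+$.

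For the infinite speed of propagation I would exhibit a positive subsolution of \eqref{C1}--\eqref{C2} that is strictly positive on all of $\mathbb{R}$ for $t>0$. When $b\leq 0$ the $b=0$ Cauchy solution with the same initial datum is itself a subsolution of \eqref{C1}--\eqref{C3}, and is everywhere strictly positive for $t>0$ by the standard fast-diffusion representation. When $b>0,\,\beta>p-1$, I would build a separable subsolution $w(x,t)=\varepsilon(t+\tau)^{1/(2-p)}\Phi(x)$, where $\Phi>0$ on $\mathbb{R}$ is smooth and solves an ODE analogous to \eqref{C13'a}; choosing $\varepsilon$ small forces $b\,w^\beta$ to be dominated by the diffusion-minus-time part, so that $w$ is a subsolution and $u\geq w>0$ for small $t$. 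For the self-similar law \eqref{C1'}, I would apply a compactness argument to the rescaled family $\{u^\lambda\}_{\lambda\downarrow 0}$: the a priori local estimates from Section~\ref{sec: preliminary results} yield local uniform boundedness and equicontinuity on compact subsets of $\mathbb{R}\times(0,\infty)$, and any limit point is a weak solution of the $b=0$ Cauchy problem with data $C(-x)_+^\alpha$. Uniqueness of the corresponding self-similar profile identifies $f(\rho)$, and \eqref{C1'} follows by specialising to $\lambda=t^{1/(p+\alpha(2-p))}$ and $x=\rho$.

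The spatial asymptotics at $x\to+\infty$ all rest on barriers built from explicit separable or stationary profiles. The function $\Psi(x,t)=D\,t^{1/(2-p)}x^{p/(p-2)}$ is an exact separated solution of the $b=0$ equation, with $D$ from \eqref{d-constant} being the unique positive coefficient making $\phi(x)=D x^{p/(p-2)}$ satisfy the ODE $(|\phi'|^{p-2}\phi')'=\phi/(2-p)$ on $(0,\infty)$. For $b=0$ this is the barrier directly; for $b<0,\,\beta\geq 1$ and for $b>0,\,\beta\geq 2/p$, a direct check shows that $|b|\Psi^\beta$ is of strictly lower order than $\Psi_t$ at $x\to\infty$ (the threshold $\beta=2/p$ being the critical borderline where $\Psi^\beta\sim\Psi_t$), so perturbations $(1\pm\epsilon)\Psi$ serve as sub- and supersolutions asymptotically and $D$ is recovered by squeezing, giving \eqref{C17'}. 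In the intermediate range $b>0,\,1\leq\beta<2/p$, the absorption cannot be discarded uniformly in $t$, and a two-stage argument is required: for each small $t$ I construct perturbed barriers that pin the leading coefficient of the $x\to\infty$ expansion to a $t$-dependent constant tending to $D$ as $t\to 0^+$, yielding the iterated limit \eqref{C18'}. For $b>0,\,p-1<\beta<1$ the stationary ODE $(|\phi'|^{p-2}\phi')'=b\phi^\beta$ admits the exact power solution $\phi(x)=C_* x^{p/(p-1-\beta)}$ from \eqref{C*}, and $(1\pm\epsilon)C_* x^{p/(p-1-\beta)}$ sandwich $u(\cdot,t)$ at infinity once the mild time dependence is shown to be dominated by the stationary balance, giving \eqref{C21'}.

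The principal obstacle will be the iterated limit \eqref{C18'}: the absorption term is too strong to be absorbed into a uniform perturbation in $t$, so the barriers must be tuned simultaneously in $t$ and $x$ so that the leading coefficient of the $x\to\infty$ expansion tends to exactly $D$ as $t\to 0^+$. A second nontrivial point is the uniqueness of the $b=0$ self-similar profile with data $C(-x)_+^\alpha$, needed to promote subsequential convergence of $\{u^\lambda\}$ to full convergence in the proof of \eqref{C1'}; this relies on the comparison theorems in irregular domains with characteristic boundary curves developed in Section~\ref{sec: preliminary results}.
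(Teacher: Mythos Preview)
Your architecture matches the paper's: \eqref{C1'} via the rescaling $u^\lambda$ (this is Lemma~\ref{lemma3.2}), and the spatial asymptotics via barriers modeled on $\Psi(x,t)=Dt^{1/(2-p)}x^{p/(p-2)}$ and on the stationary profile $C_*x^{p/(p-1-\beta)}$; you have also correctly identified the threshold $\beta=2/p$ as the point where $\Psi^\beta$ and $\Psi_t$ balance. The gap is in your \emph{lower} barriers. Your proposed subsolution $(1-\epsilon)\Psi$ equals $+\infty$ at $x=0$ for every $t>0$, whereas $u(0,t)\sim A_0\, t^{\alpha/(p+\alpha(2-p))}$ is finite by \eqref{C1'}, so the lateral inequality $g\leq u$ fails and Lemma~\ref{lemma1} cannot be invoked on $\{x>0\}$. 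In the range $p-1<\beta<1$ your proposed subsolution $(1-\epsilon)C_*x^{p/(p-1-\beta)}$ is strictly positive for every $x>0$ at $t=0$, while $u(x,0)=0$ there, so the initial inequality fails. In neither case is a squeeze from below available, and ``asymptotically a subsolution'' is not enough: you still need boundary data somewhere to run comparison, and you do not say where it comes from.

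The paper's repair is a shift in self-similar variables. The lower barrier in \eqref{C15'} is $g(x,t)=C_5\, t^{\alpha/(p+\alpha(2-p))}(\xi_1+\xi)^{p/(p-2)}$ with $\xi=xt^{-1/(p+\alpha(2-p))}$ and $\xi_1>0$; this is finite at $x=0$, and $C_5,\xi_1$ are tuned so that $g(0,t)=(A_0-\epsilon)t^{\alpha/(p+\alpha(2-p))}\leq u(0,t)$ via \eqref{C1'''}, while a direct computation (\eqref{C14'''ab}--\eqref{C16'''ab}) gives $Lg\leq 0$ on a short time interval. Letting $x\to+\infty$ at fixed $t$ recovers the coefficient $C_5$, which equals $D$ when $\beta\geq 2/p$ or $b\leq 0$ and equals $D(1-\epsilon)^{1/(2-p)}$ when $1\leq\beta<2/p$; combined with the upper bound \eqref{C16'} this is exactly what separates \eqref{C17'} from the iterated limit \eqref{C18'}. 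For $p-1<\beta<1$ the paper uses the time-modulated barrier $C_*(1-\epsilon)t^{1/(1-\beta)}(\zeta_8+\zeta)^{p/(p-1-\beta)}$ (see \eqref{C20'}), which vanishes at $t=0$ and reproduces $C_*x^{p/(p-1-\beta)}$ as $x\to+\infty$. The shift is not cosmetic: it is the device that makes comparison on $\{x\geq 0\}$ possible, and without it your lower bounds do not close.
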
 
\section{Further Details of the Main Results}\label{sec: details of the main results} 
%%%%%%
In this section we outline some essential details of the main results described in Theorems 1-5.  %of Section \ref{sec: description of the main results}.

{\it Further details of Theorem \ref{diffusiondominates'}}. 
%%%%%%%%%%%%%%
Solution $u$ satisfies the estimation
 \begin{equation}\label{C3'a}
C_1t^{1/(1-\beta)}(\zeta_1-\zeta)_+^{p/(p-1-\beta)}\leq u \leq C_*t^{1/(1-\beta)}(\zeta_2-\zeta)_+^{p/(p-1-\beta)},\; 0<t\leq \delta,
\end{equation}
where $\zeta=xt^{-(p-1-\beta)/p(1-\beta)}$ and the left-hand side of \eqref{C3'a} is valid for $0\leq x<+\infty,$ while the right-hand side is valid for $x\geq \ell_0t^{(p-1-\beta)/p(1-\beta)}$ and the constants $C_*,\;C_1,\;\zeta_1,\;\zeta_2$ and $\ell_0$ are positive and depend only on $p,\beta$ and $b$ (see Appendix).

A function $f$~is a shape function of the self-similar solution of \eqref{C1},\eqref{C4} with $b=0$~(see Lemma~\ref{lemma3.1}) and  \begin{equation}\label{C2'}
f(\rho)=C^{\frac{p}{p+\alpha(2-p)}}f_0\big (C^{\frac{2-p}{p+\alpha (2-p)}}\rho \big),\qquad f_0(\rho)=\omega(\rho,1),
\end{equation}
where ~$w$~is a solution of \eqref{C1}, \eqref{C4} with ~$b=0,\;C=1$. Lower and upper estimations for ~$f$~are given in \eqref{C22'}, \eqref{C23'}. If $u_0$ is defined as in \eqref{C4}, then the right-hand sides of \eqref{C3'a}, \eqref{C3'b} are valid for $0<t<+\infty.$ The explicit formula \eqref{C1'} means that the local behavior of the solution along the curves $x=\xi_{\rho}(t)$ approaching the origin coincides with that of the problem \eqref{C1}, \eqref{C4} with $b=0$. In other words, diffusion completely dominates in this region. However, domination of diffusion over the reaction fails along the curves $x=\zeta_{\rho}(t)=\rho t^{(p-1-\beta)/p(1-\beta)},\;\rho>0$  approaching the origin and the balance between diffusion and reaction in this region governs the interface, as expressed in estimations \eqref{C3'a}, \eqref{C3'b}. We  stress the fact that the constants $C_1, \zeta_1, \zeta_2$ and $\ell_0$ in \eqref{C3'a}, \eqref{C3'b} do not depend on $C$ and $\alpha$.

{\it Further details of Theorem \ref{diffusionbalance'}}.
%%%%%%%%%%%%%% 
 Assume that $u_0$ is defined by \eqref{C4}. If $C=C_*$ then $u_0$ is a stationary solution to \eqref{C1},\eqref{C4}. If $C\neq C_*$ the solution to \eqref{C1},\eqref{C4}
is of self-similar form \begin{equation}\label{C4'}
u(x,t)=t^{1/(1-\beta)}f_1(\zeta),\qquad~~ \zeta=xt^{-\frac{p-1-\beta}{p(1-\beta)}}, \;~u(\zeta,1)\end{equation}
\begin{equation}\label{C5'}
\eta(t)=\zeta_*t^{(p-1-\beta)/p(1-\beta)},\;\qquad 0\leq t<+\infty.
\end{equation} 
If $C>C_*$ then the interface expands, $f_1(0)=A_1>0$ (see Lemma \ref{lemma3.3}) and 
 \begin{subequations}\label{C6'ab}
\begin{equation}\label{C6'a}
C'(\zeta't^{(p-1-\beta)/p(1-\beta)}-x)_+^{p/(p-1-\beta)}\leq u(x,t) \leq C''(\zeta''t^{(p-1-\beta)/p(1-\beta)}-x)_+^{p/(p-1-\beta)},%\qquad 0<t<+\infty,
\end{equation}
\begin{equation}\label{C6'b}
\zeta'\leq \zeta_* \leq \zeta'',
\end{equation}\end{subequations}
where $0\leq x<+\infty, 0<t<+\infty$ and $C'=C_2,\; C''=C_*,\; \zeta'=\zeta_3,\;\zeta''=\zeta_4$ (see Appendix).

If $0<C<C_*$ then the interface shrinks. There exists a constant $\ell_1>0$ such that for arbitrary $\ell \leq -\ell_1,$ there exists a $\lambda>0$ such that

\begin{equation}\label{C7'}
u(\ell t^{\frac{p-1-\beta}{p(1-\beta)}},t)=\lambda t^{1/(1-\beta)}, \; t\geq 0.
\end{equation}
Moreover, $u$ and $\zeta_*$ satisfy \eqref{C6'ab} with  $C'=C_*,\; C''=C_3,\; \zeta'=-\zeta_5=-\ell_1+(\lambda /C_*)^{(p-1-\beta)/p}<0,\zeta''=-\zeta_6$ and the left-hand side of  \eqref{C6'a} is valid for $x\geq -\ell_1 t^{(p-1-\beta)/p(1-\beta)}$, while the right-hand side is valid for $x\geq -\ell_2 t^{(p-1-\beta)/p(1-\beta)}$ (see Appendix, Lemma \ref{lemma3.3} and \eqref{C7''}).

 In general the precise value $\zeta_*$ can be found only by solving the similarity ODE $\mathcal{L}^0f_1=0$ (see \eqref{C3'''b} below) and by calculating $\zeta_*=\sup \{\zeta:f_1(\zeta)>0\}.$

The right-hand side of \eqref{C9'} (respectively \eqref{C8'}) relates to the self-similar solution \eqref{C4'}, for which we have lower and upper bounds via \eqref{C6'ab}. If $u_0$ satisfies \eqref{C3} with 
$\alpha=p/(p-1-\beta), C= C_*$ then the small-time behavior of the interface and the local solution depends on the terms smaller than $C_*(-x)^{p/(p-1-\beta)}$ in the expansion of $u_0$ as $x\rightarrow 0-$.

It should be noted that if $ C>C_*$, then the estimation \eqref{C6'ab} coincides with the estimation (2.18) from \cite{Abdulla35}, proved for the case $\beta(p-1)<1,\; p>2$. If $ 0<C<C_*$ then the right-hand side of estimation \eqref{C6'ab} coincides with (2.18) from \cite{Abdulla35} proved for the case $\beta(p-1)<1,\; p>2$, while the left-hand side of \eqref{C6'ab} is new. It should also be noted that the left-hand side of the estimation (2.18) from \cite{Abdulla35}, proved there for the case $\beta(p-1)<1,\; p>2$, is still valid if $p\geq 2-\beta$. 

{\it Further details of Theorem \ref{reactiondominates'}}.
%%%%%%%%%%%%%% 
The interface initially coincides with that of the solution
\[\bar u(x,t)=\big [C^{1-\beta}(-x)_+^{\alpha(1-\beta)}-b(1-\beta)t\big]_+^{1/(1-\beta)}\]
to the problem 
\[\bar u_t+b\bar u^{\beta}=0,~~~~~~~~~~~\bar u(x,0)=C(-x)_+^{\alpha}.\]

{\it Further details of Theorem \ref{infinitespeed}}.
%%%%%%%%%%%%%% 
The solution of \eqref{C13'ab} is
\begin{equation}\label{integral1}
\phi(x)=F^{-1}(x),\quad 0\leq x<+\infty,
\end{equation}
where $F^{-1}(\cdot)$ is an inverse function of
\begin{equation}\label{integral2}
F(z)=\int_{z}^{1}\frac{dy}{y\big[\frac{b}{p-1}+\frac{p}{2(p-1)(2-p)}y^{2-p}\big]^{1/p}}, \ 0<z\leq 1.
\end{equation}
$\phi$ satisfies
\begin{equation}\label{logasymp'.} \log\phi(x)\sim-\Big(\frac{b}{p-1}\Big)^{1/p}x\;\; \text{as}\;x\rightarrow+\infty.\end{equation}
and the global estimation
\begin{equation}\label{phiglobal}
0<\phi(x) \leq e^{-\big(\frac{b}{p-1}\big)^{1/p}x},\;\;0\leq x<+\infty.
\end{equation}
Therefore, for any $\gamma> \big(\frac{b}{p-1}\big)^{1/p}$ we have
\begin{equation}\label{phiglobal1}
\lim_{x\rightarrow +\infty}\frac{\phi(x)}{e^{-\gamma x}}=+\infty.
\end{equation}
Respectively, the solution $u$ satisfies
\begin{equation}\label{phiglobal2}
\lim_{t\rightarrow 0^+}\lim_{x\rightarrow +\infty}u(x,t)e^{\big(\frac{b}{p-1}\big)^{1/p}x}=0,
\end{equation}
and for any $\gamma> \big(\frac{b}{p-1}\big)^{1/p}$
\begin{equation}\label{phiglobal3}
\lim_{x\rightarrow +\infty}\frac{u(x,t)}{e^{-\gamma x}}=+\infty, \;\;0<t \leq \delta.
\end{equation}

{\it Further details of Theorem \ref{infinitespeed'}}.
%%%%%%%%%%%%%% 
Let $\beta \geq 1$. Then for an arbitrary sufficiently small $\epsilon>0$ there exists a $\delta=\delta(\epsilon)>0$ such that
\begin{equation}\label{C15'}
 C_5t^{\alpha/(p+\alpha(2-p))}(\xi_1+\xi)^{\frac{p}{p-2}}\leq u\leq C_6t^{\alpha/(p+\alpha(2-p))}(\xi_2+\xi)^{\frac{p}{p-2}}\qquad x\geq 0,~~0\leq t\leq \delta,
\end{equation}
where $\xi=xt^{-1/(p+\alpha(2-p))}$ (see Appendix for the relevant constants). If $b>0,\; \beta\geq 1$, then the following upper estimation is also valid 
\begin{equation}\label{C16'}
u(x,t)\leq Dt^{1/(2-p)}x^{p/(p-2)}\qquad 0<x<+\infty,~~0<t<+\infty,
\end{equation}

Let $b<0,\; \beta \geq 1$. Then for an arbitrary sufficiently small $\epsilon>0$ there exists $\delta =\delta(\epsilon)>0$ such that
\begin{equation}\label{C19'}
u(x,t)\leq D(1-\epsilon)^{1/(2-p)}t^{1/(2-p)}x^{p/(p-2)}\qquad \text{for}~~\mu t^{1/(p+\alpha(2-p))}<x<+\infty,~~0<t\leq \delta,
\end{equation}
with
\[\mu =\big(D^{-1}(A_0+\epsilon)\big)^{(p-2)/p}(1-\epsilon)^{-1/p}.\]
From \eqref{C15'} and \eqref{C19'}, \eqref{C17'} again follows.

Let $b>0,\; p-1<\beta<1$. Then there exists a number $\delta>0$ such that 
\begin{equation}\label{C20'}
C_*(1-\epsilon)t^{1(1-\beta)}(\zeta_8+\zeta)_+^{p/(p-1-\beta)}\leq u(x,t)\leq C_*x^{p/(p-1-\beta)} \qquad ~0<x<+\infty,\; 0<t\leq\delta. 
\end{equation}
where $\epsilon>0$ is an arbitrary sufficiently small number

As in the case I, the explicit formula \eqref{C1'} expresses the domination of diffusion over the reaction. If $\beta \geq 1$, then from \eqref{C15'}, \eqref{C17'}, \eqref{C18'} it follows that domination of diffusion is the case for $x\gg 1$ as well, and the asymptotic behavior as $x\rightarrow +\infty$ coincides with that of the solution to problem \eqref{C1}, \eqref{C4} with $b=0$ (see below). However, if $p-1<\beta <1$ then domination of the diffusion fails for $x\gg 1$ and there is a solution of \eqref{C1} on the right-hand side of \eqref{C21'}.

Let $b=0$. In this case there is an infinite speed of propagation. First, assume that $u_0$ is defined by \eqref{C4}. Then the solution to \eqref{C1}, \eqref{C4} has the self-similar form
\begin{equation}\label{C22'}
u(x,t)=t^{\alpha/(p+\alpha(2-p))}f(\xi),\quad \xi =xt^{-1/(p+\alpha (2-p))},
\end{equation}
where $f$ satisfies \eqref{C2'}. Moreover, we have 
\begin{equation}\label{C23'}
Dt^{\alpha/(p+\alpha(2-p))}(\xi_3+\xi)^{p/(p-2)}\leq u\leq C_7t^{\alpha/(p+\alpha(2-p))}(\xi_4+\xi)^{p/(p-2)},\qquad 0\leq x, t<+\infty.
\end{equation}
(see Appendix). The right-hand side of \eqref{C23'} is not sharp enough as  $x\rightarrow +\infty$ and the required upper estimation is provided by an explicit solution to \eqref{C1}, as in  \eqref{C16'}. From \eqref{C23'} and \eqref{C16'} it follows that, for arbitrary fixed $0<t<+\infty$, the asymptotic result \eqref{C17'} is valid.
Now assume that $u_0$ satisfies \eqref{C3} with $\alpha>0$. Then \eqref{C1'} is valid and for an arbitrary sufficiently small $\epsilon>0$ there exists a $\delta=\delta(\epsilon)>0$ such that the estimation \eqref{C23'} is valid for $0<t\leq \delta$, except that in the left-hand side (respectively in the right-hand side ) of \eqref{C23'} the constant $A_0$ should be replaced by $A_0-\epsilon$ (respectively $A_0+\epsilon$). Moreover, there exists a number $\delta>0$ (which does not depend on $\epsilon$) such that, for arbitrary $t\in(0,\delta]$, the asymptotic result \eqref{C17'} is valid. 
\section{Preliminary Results}\label{sec: preliminary results} 
%%%% Most of the enunciations like theorem, lemma, corollary, proposition, defintion,
%%%% condition, example, conjecture etc. are defined in the class file.

%%%% If the author wants to add or modify the enunciation style
%%%% they can define in the preamble as shown below.

%%%% \newtheoremstyle{theorem}{6pt}{6pt}{\rm}{}{\sffamily}{ }{ }{}
%%%% \theoremstyle{theorem}
%%%% \newtheorem{theorem}{\sc Theorem}[section]

%%%%\newtheoremstyle{corollary}{6pt}{6pt}{\rm}{}{\sffamily}{ }{ }{}
%%%%\theoremstyle{corollary}
%%%%\newtheorem{corollary}{\sc Corollary}[section]

%%%%\newtheoremstyle{definition}{6pt}{6pt}{\rm}{}{\sffamily}{ }{ }{}
%%%%\theoremstyle{definition}
%%%%\newtheorem{definition}[theorem]{\sc Definition}
%%%%
%%%%\newtheorem{exercise}[theorem]{Exercise}

The following is the standard comparison result (\cite{dibe-sv})
\begin{lemma}\label{lemma1}
Let either $b\geq 0$ or $b<0$ and $\beta\geq 1$. Assume that $g$ be a nonnegative and continuous function in $\overline{Q}$, where
\[ Q=\{(x,t): \eta_0(t)<x<+\infty, 0< t <T\leq +\infty\},  \]
$f$ is in $C_{x,t}^{2,1}$ in $Q$ outside a finite number of curves $x=\eta_j(t)$, which divide $Q$ into a finite number of subdomains $Q^j$, where $\eta_j \in C[0,T]$; for arbitrary $\delta >0$ and finite $\delta_1\in (\delta, T]$ the function $\eta_j$ is absolutely continuous in $[\delta, \delta_1]$. Let $g$ satisfy the inequality 
\[  Lg\equiv g_t-\Big(|g_x|^{p-2}g_x\Big)_x+bg^{\beta} \geq 0, \ (\leq 0)  \]
at the points of $Q$, where $g \in C_{x,t}^{2,1}$. Assume also that the function $|g_x|^{p-2}g_x$ is continuous in $Q$ and $g\in L^{\infty}(Q\cap (t\leq T_1))$ for any finite $T_1\in (0,T]$. Then $g$ is a supersolution (subsolution) of \eqref{C1}. If, in addition we have
\[ g\Big |_{x=\eta_0(t)} \geq (\leq) \ u\Big |_{x=\eta_0(t)}, \ g\Big |_{t=0} \geq (\leq) \ u\Big |_{t=0} \]
then
\[ g \geq (\leq) \ u, \quad\text{in} \ \   \overline{Q}  \]
\end{lemma}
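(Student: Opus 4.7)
The plan is to prove the lemma in two stages: first show that $g$ satisfies the weak supersolution inequality \eqref{supersub} as required by the definition, and then invoke the standard weak comparison theorem for \eqref{C1} from \cite{dibe-sv}. I will describe only the supersolution case; the subsolution case is identical with reversed inequalities. In what follows I fix an arbitrary $[t_0,t_1]\subset(0,T]$, admissible curves $\mu_1<\mu_2$ with $\mu_1(t)\geq\eta_0(t)$, and a nonnegative test function $\phi\in C^{2,1}_{x,t}(\overline{D})$ that vanishes on $x=\mu_i(t)$.

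The first stage proceeds by reindexing the curves $\eta_j$ so that, inside $D$, they are ordered $\mu_1(t)=\eta_{j_0}(t)<\eta_{j_1}(t)<\cdots<\eta_{j_N}(t)<\eta_{j_{N+1}}(t)=\mu_2(t)$. This partitions $D$ into strips $D^k$ between consecutive curves on which $g$ is classically $C^{2,1}$ and satisfies $Lg\geq 0$ pointwise. Multiplying by $\phi$ and integrating by parts on each strip, using the absolute continuity of $\eta_{j_k}$ to differentiate under the integral over a moving interval, yields
\begin{equation*}
0\leq\iint_{D^k} Lg\cdot\phi\,dx\,dt=\int_{\eta_{j_{k-1}}(t)}^{\eta_{j_k}(t)} g\phi\,dx\,\Big|_{t_0}^{t_1}+\iint_{D^k}\bigl(-g\phi_t+|g_x|^{p-2}g_x\phi_x+bg^\beta\phi\bigr)\,dx\,dt+B^k,
\end{equation*}
where $B^k$ collects the lateral boundary contributions along $x=\eta_{j_{k-1}}(t)$ and $x=\eta_{j_k}(t)$: namely an $x$-integration-by-parts term $|g_x|^{p-2}g_x\phi$ from the diffusion and a moving-boundary term $g\phi\,\eta'_{j_k}$ from the time derivative. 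When summed over $k$, each interior curve appears in two adjacent $B^k$'s with opposite outward normals; by hypothesis both $g$ and $|g_x|^{p-2}g_x$ are continuous across $\eta_{j_k}$, so these two contributions cancel pairwise. The remaining outer contributions at $x=\mu_i(t)$ vanish since $\phi\equiv 0$ there. What remains is exactly the $\geq 0$ version of \eqref{supersub}, proving that $g$ is a weak supersolution.

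In the second stage, with $g$ a supersolution and the inequalities $g\geq u$ on $\{x=\eta_0(t)\}\cup\{t=0\}$ in hand, I would apply the standard weak comparison argument: test the difference of the weak formulations for $u$ and $g$ against $(u-g)_+$ (after a Steklov average in time to bypass the low regularity of $u_t$ and $g_t$), exploit the monotonicity inequality $(|a|^{p-2}a-|c|^{p-2}c)(a-c)\geq 0$ to discard the diffusion term with a favorable sign, and use either $b\geq 0$ (in which case the absorption contribution already has the correct sign) or the local Lipschitz bound on $u\mapsto bu^\beta$ for $\beta\geq 1$ to absorb the reaction contribution via Gronwall's inequality, concluding $(u-g)_+\equiv 0$ in $\overline{Q}$.

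The principal technical obstacle is the boundary-term bookkeeping at the interior curves $\eta_j$: the three hypotheses (continuity of $g$, continuity of $|g_x|^{p-2}g_x$, and absolute continuity of each $\eta_j$) are precisely calibrated to annihilate the jumps of the moving-boundary and flux boundary integrals, and omitting any one of them breaks the cancellation. A secondary subtle point is the adaptation of the comparison principle from a pure Cauchy setting to the half-strip $Q$ with inner boundary $x=\eta_0(t)$, which requires truncating the test function $(u-g)_+$ near $x=\eta_0$ so that the lateral contribution is controlled by the assumed boundary inequality $g\geq u$ on $x=\eta_0(t)$, plus a standard localization in $x\to+\infty$ using the $L^\infty$ bound on $g$ on bounded time slabs to handle the behavior at spatial infinity.
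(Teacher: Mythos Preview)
The paper does not actually prove this lemma: it is stated as ``the standard comparison result'' with a citation to \cite{dibe-sv} (DiBenedetto, \emph{Degenerate Parabolic Equations}), and no proof is given in the text. Your two-stage argument --- first verifying the weak supersolution inequality by integrating by parts on each smooth strip $D^k$ and cancelling the interior boundary terms via the continuity hypotheses on $g$ and $|g_x|^{p-2}g_x$, then invoking the weak comparison principle with Steklov averaging and Gronwall --- is exactly the standard route to such a result and is correct in outline. In effect you are supplying the details that the paper simply delegates to the literature; there is nothing to compare beyond noting that your write-up is more explicit than the paper's bare citation.
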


In the next two lemmas, we establish some preliminary estimations of the solution to CP, the proof of these estimations being based on scale of variables.
\begin{lemma}\label{lemma3.1}  If $b=0$ and $1<p<2,\; \alpha >0,$ then the solution $u$ of the CP \eqref{C1}, \eqref{C4} has the self-similar form \eqref{C22'}, where the self-similarity function $f$ satisfies \eqref{C2'}. If $u_0$ satisfies \eqref{C3} then the solution to the CP \eqref{C1}, \eqref{C2} satisfies \eqref{C1'}. \end{lemma}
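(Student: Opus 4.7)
First, I would establish the self-similar structure when $u_0$ is given by \eqref{C4}. The equation \eqref{C1} with $b=0$ is invariant under the rescaling $u_\lambda(x,t):=\lambda^{-\alpha}u(\lambda x,\lambda^c t)$, provided the exponents satisfy $-\alpha+c=(1-\alpha)(p-1)+1$, which forces $c=p+\alpha(2-p)$. The profile $C(-x)_+^{\alpha}$ is itself fixed by this rescaling, since $u_\lambda(x,0)=\lambda^{-\alpha}C(-\lambda x)_+^\alpha=C(-x)_+^\alpha$. Uniqueness for the Cauchy problem with $b=0$ and $1<p<2$ (from \cite{dibe3,dibe-sv}) then forces $u_\lambda\equiv u$, and specializing $\lambda=t^{-1/c}$ rearranges this identity into the self-similar form \eqref{C22'} with $f(\xi):=u(\xi,1)$.

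Next, to derive the formula \eqref{C2'} for $f$ I would use the broader two-parameter family $v(x,t):=\mu\,\omega(\nu x,\mu^{p-2}\nu^p t)$, which is again a solution of \eqref{C1} with $b=0$ for any $\mu,\nu>0$. Applied to $\omega$ (the $C=1$ solution) it produces initial trace $\mu\nu^\alpha(-x)_+^\alpha$; taking $\mu=1$, $\nu=C^{1/\alpha}$ yields $C(-x)_+^\alpha$, so by uniqueness $u(x,t)=\omega(C^{1/\alpha}x,C^{p/\alpha}t)$. Setting $t=1$ and substituting the self-similar representation $\omega(x,t)=t^{\alpha/c}f_0(xt^{-1/c})$ gives $f(\rho)=C^{p/c}f_0(C^{(2-p)/c}\rho)$, which is exactly \eqref{C2'} since $c-p=\alpha(2-p)$.

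For the general initial condition \eqref{C3} I would study the same rescaling $u_\lambda$ in the singular limit $\lambda\to 0^+$. Hypothesis \eqref{C3} gives $\lambda^{-\alpha}u_0(\lambda x)\to C(-x)_+^\alpha$ pointwise. Using interior H\"older and gradient estimates together with the stability theory for the singular $p$-Laplacian evolution equation (see \cite{dibe-sv}), I would extract a locally uniformly convergent subsequence of $u_\lambda$ and identify its limit as the self-similar solution $\omega$ of \eqref{C1},\eqref{C4} through uniqueness of the limiting Cauchy problem. Evaluating at the fixed point $(\rho,1)$ yields $u_\lambda(\rho,1)\to\omega(\rho,1)=f(\rho)$, and substituting $t:=\lambda^c$ rewrites this as $t^{-\alpha/c}u(\xi_\rho(t),t)\to f(\rho)$, which is precisely \eqref{C1'}.

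The main technical difficulty lies in this last step: \eqref{C3} is only a local hypothesis near $x=0$, whereas the rescaling acts globally and $u_0$ may be unbounded at $-\infty$, so one must make sure that the behaviour far from the origin does not pollute the limit on compact subsets of the $(x,t)$-plane. A safer substitute for the compactness argument is a direct barrier construction: for each $\epsilon>0$ trap $u_0$ near the origin between $(C\pm\epsilon)(-x)_+^\alpha$, modify these outside a small neighbourhood of $0$ to match the growth of $u_0$ at $-\infty$ in a way compatible with Lemma \ref{lemma1}, invoke comparison against the self-similar solutions already constructed, and let $\epsilon\to 0$ using the continuity of $C\mapsto f(\rho)$ built into \eqref{C2'}.
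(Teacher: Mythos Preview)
Your proposal is correct and follows essentially the same route as the paper, which defers the proof to Lemma~5 of \cite{Abdulla35}: scaling invariance plus uniqueness for the self-similar form \eqref{C22'} and \eqref{C2'}, and then a rescaling/limit argument (with comparison barriers to control the behaviour away from the origin) for the general initial condition \eqref{C3}. The structure you describe---rescaled family $u_\lambda$, compactness via interior H\"older estimates, identification of the limit through uniqueness, and the $\epsilon$-sandwich $(C\pm\epsilon)(-x)_+^\alpha$ to localize the hypothesis---is exactly the template the paper itself applies in the proof of Lemma~\ref{lemma3.2}.
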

The proof of the lemma coincides with the proof of Lemma 5 from \cite{Abdulla35}.
\begin{lemma}\label{lemma3.2} Let $u$ be a solution of the \eqref{C1}, \eqref{C2} and let $u_0$ satisfy  \eqref{C3}. Let one of the following conditions be valid:
\begin{itemize}%[label=(\alph*)]
\item (a) \ $ b>0,~\; 0<\beta<p-1<1,~\; 0<\alpha <\frac{p}{p-1-\beta}$;
\item (b) \ $b>0,~\; 0<p-1<1,~\; \beta\geq p-1,~\; \alpha>0$;
\item (c) \ $b<0,~\; \beta \geq 1,~\; 0<p-1 <1,~\; \alpha >0$.
\end{itemize}
Then $u$ satisfies \eqref{C1'} with the same function $f$ as in Lemma \ref{lemma3.1}. \end{lemma}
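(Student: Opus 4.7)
The strategy is to exploit the nonlinear scaling invariance of the $b=0$ version of \eqref{C1} and reduce the assertion to Lemma~\ref{lemma3.1}. For $k\in(0,1]$, define
\[ u_k(x,t):=k^{-\alpha}u(kx,k^{\sigma}t),\qquad \sigma:=p+\alpha(2-p). \]
A direct computation---which depends crucially on this choice of $\sigma$---shows that $u_k$ is a weak solution of
\[ (u_k)_t-\bigl(|(u_k)_x|^{p-2}(u_k)_x\bigr)_x+b\,k^{\gamma}\,u_k^\beta=0,\qquad \gamma:=p-\alpha(p-1-\beta), \]
with initial datum $u_k(x,0)=k^{-\alpha}u_0(kx)$. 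The three parameter regimes (a)--(c) are exactly those that force $\gamma>0$: hypothesis (a) is equivalent to $\alpha(p-1-\beta)<p$, while in (b) one has $\beta\geq p-1$ and in (c) $\beta\geq 1>p-1$, so in both latter cases $p-1-\beta\leq 0$ and $\gamma\geq p>0$. Hence the reactive coefficient $bk^\gamma$ vanishes as $k\to 0^+$, and by \eqref{C3} the scaled initial data tends locally to $C(-x)_+^\alpha$. Setting $k=t^{1/\sigma}$ in the definition of $u_k$ identifies
\[ u_k(\rho,1)=t^{-\alpha/\sigma}u(\xi_\rho(t),t), \]
so the desired asymptotic \eqref{C1'} is precisely the statement that $u_k(\rho,1)\to f(\rho)$ as $k\to 0^+$, with $f$ the shape function of the $b=0$ self-similar solution from Lemma~\ref{lemma3.1}.

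To promote this heuristic to a proof I would sandwich $u_k$ between explicit sub- and supersolutions, invoking the comparison principle of Lemma~\ref{lemma1} rather than relying on abstract compactness. When $b\geq 0$ (cases (a), (b)), the pure-diffusion solution $w_0$ with initial datum $u_0$ satisfies $Lw_0=bw_0^\beta\geq 0$; hence $w_0$ is a supersolution of \eqref{C1} and $u\leq w_0$, and Lemma~\ref{lemma3.1} applied to $w_0$ gives the upper half of \eqref{C1'}. For the matching lower bound I would use, for each $\epsilon>0$, a perturbed self-similar subsolution of the form
\[ \underline u_k(x,t)=(1-\epsilon)(t+\tau_k)^{\alpha/\sigma}f\bigl(x(t+\tau_k)^{-1/\sigma}\bigr), \]
with $\tau_k\downarrow 0$ chosen so that $\underline u_k(\cdot,0)\leq u_k(\cdot,0)$; the $(1-\epsilon)$ factor generates a strictly negative defect in the self-similar identity $\mathcal{L}^0 f=0$ which, for $k$ sufficiently small, dominates the small reactive perturbation $bk^\gamma\underline u_k^\beta$. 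Lemma~\ref{lemma1} then yields $\underline u_k\leq u_k$, and passing to the limit first in $k$ and then in $\epsilon$ supplies the lower half of \eqref{C1'}. Case (c), with $b<0$, is treated symmetrically: $w_0$ is now a subsolution and the matching supersolution is of the same perturbed self-similar form with $(1+\epsilon)$ replacing $(1-\epsilon)$.

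The main technical obstacle is the verification of the sub/supersolution inequality for the perturbed self-similar barrier near the free boundary $\{f=0\}$, where the fast-diffusion coefficient $|f'|^{p-2}$ can be singular and the sign margin obtained from the $(1\mp\epsilon)$-factor degenerates. I would handle this by truncating in a thin strip around $\{f=0\}$ and replacing the barrier there by a simpler, non-self-similar piece of strictly lower order in $t$, matching the outer barrier at the junction; this is entirely analogous to the construction carried out in the companion paper~\cite{Abdulla35}. Once this local calculation is in place, Lemma~\ref{lemma1} applied on a suitable expanding domain, followed by $k\to 0^+$ and $\epsilon\to 0^+$, delivers the conclusion.
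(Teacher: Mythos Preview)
Your scaling reduction is exactly the one the paper uses (after the harmless relabeling $k\mapsto k^{-1/\alpha}$), and your observation that the hypotheses (a)--(c) are precisely what force the rescaled absorption coefficient $bk^{\gamma}$ to vanish is correct. The one-sided comparison with the pure-diffusion solution $w_0$ is also fine and gives half of \eqref{C1'} immediately in each case.

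The gap is in the matching bound. In the fast-diffusion regime $1<p<2$ the $b=0$ equation has infinite speed of propagation, so the shape function $f$ of Lemma~\ref{lemma3.1} is \emph{strictly positive on all of $\mathbb{R}$}; there is no set $\{f=0\}$, and your discussion of a ``free boundary'' of $f$ does not apply here. Consequently your proposed time-shifted subsolution $\underline u_k(x,0)=(1-\epsilon)\tau_k^{\alpha/\sigma}f(x\tau_k^{-1/\sigma})$ is positive for every $x$ and every $\tau_k>0$, whereas $u_k(x,0)=k^{-\alpha}u_0(kx)=0$ for all $x\geq 0$ (recall $\eta(0)=0$). So the initial ordering $\underline u_k(\cdot,0)\leq u_k(\cdot,0)$ you need for Lemma~\ref{lemma1} is impossible on the whole line, and restricting to a half-line would require lateral boundary information about $u$ that you do not yet have. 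The same obstruction hits the supersolution in case~(c): for $x\ll 0$ the rescaled datum $k^{-\alpha}u_0(kx)$ need not be controlled by $(1+\epsilon)\tau_k^{\alpha/\sigma}f(x\tau_k^{-1/\sigma})$ uniformly in $k$, since $u_0$ is only asymptotically $C(-x)_+^{\alpha}$.

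The paper avoids this by not building self-similar barriers at all for this lemma. Instead it brackets $u$ between solutions $u_{\pm\epsilon}$ with exact power initial data $(C\pm\epsilon)(-x)_+^{\alpha}$, rescales those, proves the family $\{u_k^{\pm\epsilon}\}$ is locally uniformly bounded via a crude barrier of the form $g(x,t)=(C+1)(1+x^2)^{\alpha/2}h(t)$, and then invokes interior H\"older estimates and weak $W^{1,p}_{\mathrm{loc}}$-compactness to extract a subsequential limit $v_{\pm\epsilon}$. Passing to the limit in the weak formulation \eqref{supersub} kills the $bk^{\gamma}$ term and identifies $v_{\pm\epsilon}$ as the unique $b=0$ solution with datum $(C\pm\epsilon)(-x)_+^{\alpha}$, after which Lemma~\ref{lemma3.1} and $\epsilon\to 0$ finish. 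The compactness route sidesteps entirely the need to compare with $f$ itself.
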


\begin{lemma}\label{lemma3.3} Let $u$ be a solution to the CP  \eqref{C1}, \eqref{C4} with $b>0,\; 0<\beta<1,\; p-1>\beta,\;\alpha=p/(p-1-\beta),\;C>0$. Then the solution $u$ has the self-similar form  \eqref{C4'}. There is a constant $\ell_1>0$ such that for arbitrary $\ell\in(-\infty,-\ell_1]$ there exists $\lambda>0$ such that  \eqref{C7'} is valid. If $0<C<C_*$ then 
 \begin{equation}\label{C7''}
0<\lambda<C_*(-\ell)^{p/(p-1-\beta)}.\end{equation}
 If $C>C_*$ ~ then ~$f_1(0)=A_1>0$ where $A_1$ depends on $p,\;\beta,\; C$ and $b$.\end{lemma}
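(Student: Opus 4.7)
I begin by exploiting the scaling symmetry of problem \eqref{C1}, \eqref{C4}. For $\mu>0$ let
\[
u_\mu(x,t) := \mu^{-1/(1-\beta)}\,u\bigl(\mu^{(p-1-\beta)/(p(1-\beta))}x,\,\mu t\bigr).
\]
A chain-rule calculation shows $Lu_\mu = \mu^{\beta/(1-\beta)}(Lu)$ evaluated at the rescaled point, so $u_\mu$ solves \eqref{C1}; the exponents are chosen so that the rescaling of $x$ matches the power $\alpha=p/(p-1-\beta)$ in the initial datum, whence $u_\mu(\cdot,0)=u_0$. Uniqueness of weak solutions of the CP (valid for $b>0$, $1<p<2$ as discussed in Section~\ref{intro.}) forces $u_\mu\equiv u$. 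Setting $\mu=1/t$ produces \eqref{C4'} with $f_1(\zeta):=u(\zeta,1)$.

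\textbf{Stationary barrier and comparison.} Using \eqref{C*}, a direct substitution verifies that $u_*(x):=C_*(-x)_+^{p/(p-1-\beta)}$ is a time-independent weak solution of \eqref{C1}; its self-similar profile is $f_1^*(\zeta)=C_*(-\zeta)_+^{p/(p-1-\beta)}$. Lemma~\ref{lemma1} then gives $f_1\leq f_1^*$ when $0<C<C_*$ and $f_1\geq f_1^*$ when $C>C_*$. The initial asymptotic \eqref{C3} transfers via the self-similar form into $f_1(\zeta)\sim C(-\zeta)^{p/(p-1-\beta)}$ as $\zeta\to-\infty$; in particular $f_1(\zeta)\to+\infty$. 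Hence there exists $\ell_1>0$ with $f_1(\ell)>0$ for all $\ell\leq-\ell_1$, so $\lambda:=f_1(\ell)>0$ satisfies \eqref{C7'}. In the subcase $0<C<C_*$ the strong maximum principle (applicable on the positive set $\{u_*>0\}=\{x<0\}$, where the fast $p$-Laplacian is locally uniformly parabolic) upgrades $f_1\leq f_1^*$ to strict inequality for $\zeta<0$, yielding \eqref{C7''}.

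\textbf{Positivity of $f_1(0)$ for $C>C_*$.} This is the main obstacle, since $u\geq u_*$ only yields $u(0,t)\geq 0$. The key sign observation is that $\beta<p-1$ makes every dilation $\mu u_*$ with $\mu>1$ a strict subsolution:
\[
L(\mu u_*)=bu_*^\beta\bigl(\mu^\beta-\mu^{p-1}\bigr)<0.
\]
Taking $\mu=C/C_*>1$ shows $u_0=(C/C_*)u_*$ is itself a stationary subsolution, so $u(x,t)\geq u_0(x)$ throughout. To obtain a subsolution that is positive at $x=0$ for $t>0$, I introduce the advancing wave-type barrier
\[
\underline{w}(x,t):=C_1\bigl(\zeta_0\,t^{(p-1-\beta)/(p(1-\beta))}-x\bigr)_+^{p/(p-1-\beta)},\qquad C_*<C_1<C,\ \zeta_0>0.
\]
A direct computation expresses $L\underline{w}$ as a sum of a positive ``transport'' term of order $(\zeta_0\tau-x)^{(1+\beta)/(p-1-\beta)}$ (from $\partial_t$) and a negative ``reaction'' term of order $(\zeta_0\tau-x)^{\beta p/(p-1-\beta)}$ (negative by the same $\mu^\beta<\mu^{p-1}$ mechanism applied at $\mu=C_1/C_*$). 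Since the difference of exponents is negative, the reaction term dominates whenever $\zeta_0\tau-x$ is bounded above by an explicit threshold $M=M(C_1,\zeta_0,p,\beta,b)>0$, so $L\underline{w}\leq0$ on $\{x\geq(\zeta_0-M)\tau\}$. Taking $\zeta_0$ sufficiently small makes $M\gg\zeta_0$, placing the lateral boundary $x=(\zeta_0-M)\tau$ well inside $\{x<0\}$. There the lower bound $u\geq u_0$ gives $u\geq C(M-\zeta_0)^{p/(p-1-\beta)}\tau^{p/(p-1-\beta)}$, while $\underline{w}=C_1M^{p/(p-1-\beta)}\tau^{p/(p-1-\beta)}$; the inequality $\underline{w}\leq u$ on the boundary reduces to $(M/(M-\zeta_0))^{p/(p-1-\beta)}\leq C/C_1$, which holds once $\zeta_0/M$ is small and $C_1/C$ is bounded away from $1$. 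The initial condition $\underline{w}(x,0)=C_1(-x)_+^{p/(p-1-\beta)}\leq u_0(x)$ is immediate from $C_1<C$. Lemma~\ref{lemma1} then yields $u\geq\underline{w}$ on the relevant region, and at $x=0$,
\[
u(0,t)\geq\underline{w}(0,t)=C_1\zeta_0^{p/(p-1-\beta)}\,t^{1/(1-\beta)},
\]
whence by the self-similar form $f_1(0)\geq C_1\zeta_0^{p/(p-1-\beta)}>0$. Setting $A_1:=f_1(0)$ gives the result; its dependence on $p,\beta,C,b$ is inherited from the uniqueness of $f_1$ as the self-similar profile.

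The principal difficulty is the last step: one must simultaneously arrange $L\underline{w}\leq0$ on a domain that reaches $x=0$ \emph{and} $\underline{w}\leq u$ on its lateral boundary, which forces the coupled calibration of $\zeta_0$, $C_1$, and $M$. The sign condition $\beta<p-1$ is essential throughout, and this same mechanism underlies the lower estimate in \eqref{C6'a} used later in the proof of Theorem~\ref{diffusionbalance'}.
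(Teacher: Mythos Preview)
Your proof is essentially correct and follows the same overall architecture as the paper (scaling invariance for the self-similar form, then explicit comparison barriers), but the barrier you build for $f_1(0)>0$ differs from the paper's. The paper uses the simpler traveling-wave function $g(x,t)=C_1(t-x)_+^{p/(p-1-\beta)}$ with $C_1\in(C_*,C)$: one computes $Lg=bg^\beta S$ with
\[
S=1-\Big(\frac{C_1}{C_*}\Big)^{p-1-\beta}+\frac{p}{b(p-1-\beta)}C_1^{1-\beta}(t-x)^{(1-\beta(p-1))/(p-1-\beta)},
\]
and simply picks $x_1<0$ and $t_1>0$ with $t_1-x_1$ small enough that $S\le0$ on $\{x_1\le x\le t,\ 0\le t\le t_1\}$. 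The lateral boundary is then the fixed vertical line $x=x_1$, and $g(x_1,t)\le u(x_1,t)$ follows directly from continuity of $u$ at $(x_1,0)$, where $g(x_1,0)=C_1(-x_1)^{\alpha}<C(-x_1)^{\alpha}=u_0(x_1)$. Your self-similar barrier $\underline{w}$ also works, and is in fact the template the paper uses later for the lower bound in \eqref{C6'a}; but because its lateral boundary is the moving curve $x=(\zeta_0-M)\tau$, you are forced to first establish the global monotonicity $u(x,t)\ge u_0(x)$ (via $u_0$ being a stationary subsolution when $C>C_*$) in order to control $u$ there. The paper's traveling-wave choice sidesteps this preliminary step entirely.

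One point should be corrected: your appeal to the strong maximum principle to upgrade $f_1\le f_1^*$ to strict inequality on $\{\zeta<0\}$ is not justified as written---the phrase ``locally uniformly parabolic on $\{u_*>0\}$'' is the wrong condition, since the singularity of the fast $p$-Laplacian is governed by $u_x$, not by the size of $u$. Fortunately you do not need it. You have already established $f_1(\zeta)\sim C(-\zeta)^{p/(p-1-\beta)}$ as $\zeta\to-\infty$; since $C<C_*$, this immediately yields $f_1(\ell)<C_*(-\ell)^{p/(p-1-\beta)}$ for all $\ell\le-\ell_1$ once $\ell_1$ is taken large enough, which is precisely \eqref{C7''}. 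The paper obtains the same conclusion by the equivalent device of continuity of $u$ at a fixed spatial point $x_1<0$, where $u(x_1,0)=C(-x_1)^{\alpha}<C_*(-x_1)^{\alpha}$.
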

 
\begin{lemma}\label{lemma3.4} Let $u$ be a solution to the CP   \eqref{C1}-\eqref{C3} with $b>0,\;0<\beta <1,\;p-1>\beta,\; \alpha=p/(p-1-\beta),\; C>0.$ Then for  arbitrary $\ell\in(-\infty,-\ell_1]$ we have 
 \begin{equation}\label{C12''}
u(\ell t^{(p-1-\beta)/(p(1-\beta))},t)\sim \lambda t^{1/(1-\beta)} ~\text{as}~t\rightarrow 0^+,
\end{equation}
 where $\ell_1>0,\; \lambda >0$ are the same as in Lemma \ref{lemma3.3} and if $0<C<C_*$ then  \eqref{C7''} is also valid. If $C>C_*$ then $u$ satisfies 
 \begin{equation}\label{C13''}
u(0,t)\sim A_1 t^{1/(1-\beta)}~\text{as}~t\rightarrow 0^+,
\end{equation}
 where $A_1=f_1(0)>0$ (see Lemma \ref{lemma3.3}).\end{lemma}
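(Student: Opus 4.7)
The plan is to derive the asymptotics for the general solution $u$ by sandwiching it between the exact self-similar solutions supplied by Lemma \ref{lemma3.3}, using the comparison principle of Lemma \ref{lemma1}, and then passing to the limit as the approximating initial constant tends to $C$.

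Fix $\epsilon>0$ small. By the hypothesis \eqref{C3}, I would choose $\delta=\delta(\epsilon)>0$ such that
\[
(C-\epsilon)(-x)_+^{\alpha}\leq u_0(x)\leq (C+\epsilon)(-x)_+^{\alpha},\qquad -\delta\leq x\leq 0,
\]
and such that $(C-\epsilon/2)\delta^{\alpha}\leq u_0(-\delta)\leq (C+\epsilon/2)\delta^{\alpha}$. Let $u_{\epsilon}^{\pm}$ denote the self-similar solutions of the CP \eqref{C1},\eqref{C4} with initial data $(C\pm\epsilon)(-x)_+^{\alpha}$; Lemma \ref{lemma3.3} provides their structure
\[
u_{\epsilon}^{\pm}(x,t)=t^{1/(1-\beta)}\,f_{1,\epsilon}^{\pm}\bigl(x\,t^{-(p-1-\beta)/p(1-\beta)}\bigr),
\]
with shape functions $f_{1,\epsilon}^{\pm}$ determined by the asymptotic condition $f_{1,\epsilon}^{\pm}(\zeta)\sim(C\pm\epsilon)(-\zeta)^{\alpha}$ as $\zeta\to-\infty$.

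Next I would apply Lemma \ref{lemma1} on the strip $Q_{\delta,t_0}=\{(x,t):x>-\delta,\;0<t<t_0\}$ for some $t_0=t_0(\epsilon)>0$. The initial inequality $u_{\epsilon}^{-}\leq u\leq u_{\epsilon}^{+}$ at $t=0$ holds on $[-\delta,0]$ by construction, and on $[0,+\infty)$ trivially because all three functions vanish there. On the lateral face $x=-\delta$, continuity up to $t=0$ gives $u_{\epsilon}^{\pm}(-\delta,t)\to(C\pm\epsilon)\delta^{\alpha}$ and $u(-\delta,t)\to u_0(-\delta)$, so the strict inequalities at $t=0$ persist on a short interval $[0,t_0]$. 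Lemma \ref{lemma1} then yields
\[
u_{\epsilon}^{-}(x,t)\leq u(x,t)\leq u_{\epsilon}^{+}(x,t),\qquad (x,t)\in\overline{Q_{\delta,t_0}}.
\]
Evaluating at $x=\ell t^{(p-1-\beta)/p(1-\beta)}$ with $\ell\leq-\ell_1$ (which lies in the strip once $t$ is small enough) and dividing by $t^{1/(1-\beta)}$,
\[
f_{1,\epsilon}^{-}(\ell)\leq t^{-1/(1-\beta)}\,u\bigl(\ell t^{(p-1-\beta)/p(1-\beta)},t\bigr)\leq f_{1,\epsilon}^{+}(\ell),
\]
and the analogous bound holds at $\zeta=0$ when $C>C_*$.

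To complete the proof one has to send $\epsilon\downarrow 0$ and identify $\lim_{\epsilon\to 0}f_{1,\epsilon}^{\pm}(\zeta)=f_1(\zeta)$ at $\zeta=\ell$ and at $\zeta=0$. This continuous dependence of the similarity profile on the asymptotic constant $C$ is the principal technical obstacle: since the critical exponent $\alpha=p/(p-1-\beta)$ destroys the $u\mapsto\mu u$ scale-invariance of \eqref{C1}, $f_{1,\epsilon}^{\pm}$ and $f_1$ are genuinely distinct solutions of the similarity ODE $\mathcal{L}^0 f_1=0$ and cannot be related by a trivial rescaling. I would handle this by a monotonicity-plus-uniqueness argument: Lemma \ref{lemma1} applied at the PDE level to data ordered by $C$ gives monotonicity of $u_{\epsilon}^{\pm}$ in $\epsilon$, hence $f_{1,\epsilon}^{-}(\zeta)\uparrow\underline f(\zeta)$ and $f_{1,\epsilon}^{+}(\zeta)\downarrow\overline f(\zeta)$ as $\epsilon\downarrow 0$; both $\underline f$ and $\overline f$ then generate self-similar solutions of \eqref{C1},\eqref{C4} with initial constant $C$, and the uniqueness of the self-similar solution asserted in Lemma \ref{lemma3.3} forces $\underline f=\overline f=f_1$. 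The resulting squeeze, with $\lambda=f_1(\ell)$ and $A_1=f_1(0)$, proves \eqref{C12''} and \eqref{C13''}, and the bound \eqref{C7''} for $0<C<C_*$ is inherited from the corresponding bound in Lemma \ref{lemma3.3} by passing to the limit.
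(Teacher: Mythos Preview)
Your argument is correct in outline and constitutes a genuinely different route from the paper's. The paper disposes of Lemma~\ref{lemma3.4} in one line, saying it ``may be proved by localization of the proof given in Lemma~\ref{lemma3.3}''; unpacked, this means one introduces the rescaled family $u_k(x,t)=k\,u(k^{-(p-1-\beta)/p}x,\,k^{\beta-1}t)$, observes that each $u_k$ solves \eqref{C1} with initial data $k\,u_0(k^{-(p-1-\beta)/p}x)\to C(-x)_+^{\alpha}$ as $k\to\infty$ (because $\alpha(p-1-\beta)/p=1$), and then uses the H\"older compactness of \cite{dibe-sv} together with uniqueness of the CP to identify the limit with the self-similar solution of Lemma~\ref{lemma3.3}, exactly as in the proof of Lemma~\ref{lemma3.2}. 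Evaluating at $(\ell,1)$ and undoing the scaling yields \eqref{C12''} and \eqref{C13''}. Your approach bypasses the blow-up step and instead sandwiches $u$ directly between the exact self-similar solutions $u_\epsilon^{\pm}$ via Lemma~\ref{lemma1}; the price is that you must establish continuous dependence $f_{1,C\pm\epsilon}(\zeta)\to f_1(\zeta)$, which you do by monotonicity plus uniqueness. That step is legitimate, but note that to conclude the monotone limits $\underline f,\overline f$ actually define weak solutions of \eqref{C1} with the correct initial trace $C(-x)_+^{\alpha}$ you are implicitly invoking the same local H\"older compactness that the paper's rescaling argument uses---so the two proofs ultimately rest on the same analytic ingredient. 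What your version buys is conceptual directness (no rescaling gymnastics), while the paper's version buys economy: it simply reuses the machinery already built in Lemmas~\ref{lemma3.1}--\ref{lemma3.3} without introducing an auxiliary one-parameter family of comparison profiles and a separate stability lemma.
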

\begin{lemma}\label{lemma3.5} Let $u$ be a solution to the CP \eqref{C1}-\eqref{C3} with $b>0,\; 0<\beta<1,\; p-1>\beta,\; \alpha>p/(p-1-\beta),\; C>0.$ Then for arbitrary $\ell>\ell_*$ (see \eqref{C10'}), the asymptotic formula \eqref{C11'} is valid.\end{lemma}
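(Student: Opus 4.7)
The plan is to construct super- and subsolutions based on the pure-reaction (ODE) solution
\[
\bar u(x,t)=\bigl[C^{1-\beta}(-x)_+^{\alpha(1-\beta)}-b(1-\beta)t\bigr]_+^{1/(1-\beta)}
\]
of $\bar u_t+b\bar u^\beta=0$, invoke the comparison Lemma~\ref{lemma1}, and pass to the limit. The heuristic is that under $\alpha>p/(p-1-\beta)$ the diffusion term $(|u_x|^{p-2}u_x)_x$ is of strictly higher order in $t$ than $bu^\beta$ along the self-similar curves $x\sim t^{1/(\alpha(1-\beta))}$ on which \eqref{C11'} is to be proved; the target constant is exactly $\bar u(\eta_\ell(t),t)\,t^{-1/(1-\beta)}$ and is positive precisely when $\ell>\ell_*$.

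Fix $\epsilon\in(0,C)$. Using \eqref{C3}, choose $\delta_0(\epsilon)>0$ such that $(C-\epsilon)(-x)_+^\alpha\le u_0(x)\le(C+\epsilon)(-x)_+^\alpha$ on $[-\delta_0,0]$, and introduce the candidate barriers
\[
W_\pm(x,t)=\bigl[(C\pm\epsilon)^{1-\beta}(-x)_+^{\alpha(1-\beta)}-b(1\mp\nu\epsilon)(1-\beta)t\bigr]_+^{1/(1-\beta)},
\]
with $\nu>0$ to be fixed. In their positivity sets $(W_\pm)_t+b(1\mp\nu\epsilon)W_\pm^\beta=0$, whence $LW_\pm=\pm\nu\epsilon\,bW_\pm^\beta-\bigl(|(W_\pm)_x|^{p-2}(W_\pm)_x\bigr)_x$. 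Computing $(W_\pm)_x=-(C\pm\epsilon)^{1-\beta}\alpha(-x)^{\alpha(1-\beta)-1}W_\pm^\beta$ and noting that $\alpha>p/(p-1-\beta)$ together with $\beta(p-1)<1$ forces $\alpha(1-\beta)>1$, one sees that both $(W_\pm)_x$ and the flux $|(W_\pm)_x|^{p-2}(W_\pm)_x$ vanish at the free boundary, placing $W_\pm$ inside the regularity framework required by Lemma~\ref{lemma1}. A scaling count along the curves $x=\rho\,t^{1/(\alpha(1-\beta))}$ yields the uniform estimate
\[
\bigl|\bigl(|(W_\pm)_x|^{p-2}(W_\pm)_x\bigr)_x\bigr|\le K\,W_\pm^\beta\,t^{\sigma}\quad\text{in the positivity set,}
\]
with $\sigma=[\alpha(p-1-\beta)-p]/(\alpha(1-\beta))>0$ and $K=K(p,\beta,\alpha,C,b,\delta_0)$, so that for $t\in(0,\delta(\epsilon,\nu)]$ small, the $\pm\nu\epsilon\,bW_\pm^\beta$ term dominates the diffusion correction with the correct sign, and $W_+$ is a supersolution, $W_-$ a subsolution on $\{-\delta_0<x<0\}\times(0,\delta]$.

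For the lateral data, $W_\pm(\cdot,0)=(C\pm\epsilon)(-x)_+^\alpha$ sandwich $u_0$ on $[-\delta_0,0]$; at $x=-\delta_0$, continuity of $u$ at $t=0$ and the strict bracketing of $u_0(-\delta_0)$ by $W_\pm(-\delta_0,0)$ (for $\epsilon>0$) extend to $0\le t\le\delta$ after possibly shrinking $\delta$. The subsolution comparison on the right is trivial since $W_-\equiv 0$ off its support; for the supersolution, one compares on the positivity set of $W_+$ (whose right free boundary lies strictly to the left of $x=\eta_\ell(t)$ for $\ell>\ell_*$ and $\epsilon$ small), and controls $u$ on that right boundary via the pure-diffusion upper bound $u\le v$ (valid since $b>0$) combined with the self-similar asymptotic for $v$ from Lemma~\ref{lemma3.1}. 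Any excess, which is $O(t^{\alpha/(p+\alpha(2-p))})$, is absorbed into an additive perturbation $\mu(t)$ satisfying $\mu(t)=o(t^{1/(1-\beta)})$ precisely because $\alpha(p-1-\beta)>p$, so it does not affect the leading-order asymptotic. Applying Lemma~\ref{lemma1} and evaluating at $x=\eta_\ell(t)=-\ell t^{1/(\alpha(1-\beta))}$, with $A_\pm:=\bigl[(C\pm\epsilon)^{1-\beta}\ell^{\alpha(1-\beta)}-b(1\mp\nu\epsilon)(1-\beta)\bigr]^{1/(1-\beta)}$ (both positive for $\epsilon$ small since $\ell>\ell_*$), one obtains
\[
A_-\,t^{1/(1-\beta)}(1+o(1))\le u(\eta_\ell(t),t)\le A_+\,t^{1/(1-\beta)}(1+o(1)),\qquad t\to 0^+.
\]
Letting $\epsilon\to 0^+$ yields \eqref{C11'}. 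The main obstacle is precisely the uniform control of the $p$-Laplacian correction across the full positivity strip (not only on the distinguished curve) together with the right-boundary treatment for $W_+$ — both of which must be carried out without invoking the not-yet-proved conclusion of Theorem~\ref{reactiondominates'} that $u$ has a finite interface shrinking at rate $t^{1/(\alpha(1-\beta))}$.
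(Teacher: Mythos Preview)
Your subsolution $W_-$ is fine: in its positivity set both $-\nu\epsilon\,bW_-^\beta$ and $-(|(W_-)_x|^{p-2}(W_-)_x)_x$ are non-positive (the diffusion term you computed is strictly positive), so $LW_-\le 0$ holds automatically and the lower half of \eqref{C11'} follows. The gap is in the upper bound. Your claimed uniform estimate $|(|(W_+)_x|^{p-2}(W_+)_x)_x|\le K\,W_+^\beta\,t^\sigma$ on the full positivity set is false. Writing $m=\alpha(1-\beta)-1$ one gets
\[
\frac{(|(W_+)_x|^{p-2}(W_+)_x)_x}{W_+^\beta}
\;=\;A^{p-1}m(p-1)(-x)^{m(p-1)-1}W_+^{\beta(p-2)}
\;+\;A^{p}\beta(p-1)(-x)^{mp}W_+^{\beta(p-1)-1},
\]
and since $p<2$ and $\beta(p-1)<1$, both exponents $\beta(p-2)$ and $\beta(p-1)-1$ are negative, so the ratio diverges as $W_+\to 0$ at the free boundary. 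Hence $LW_+\ge 0$ fails in every neighbourhood of the free boundary, and Lemma~\ref{lemma1} cannot be applied on the positivity set. Your proposed remedy via the pure-diffusion bound $u\le v$ and an additive $\mu(t)=o(t^{1/(1-\beta)})$ does not close this: along $x=-r\,t^{1/(\alpha(1-\beta))}$ with $r$ near $\ell_*$ one has $v\sim C r^\alpha t^{1/(1-\beta)}$, which is $O(t^{1/(1-\beta)})$, not $o$, and moreover $C r^\alpha>[C^{1-\beta}r^{\alpha(1-\beta)}-b(1-\beta)]^{1/(1-\beta)}$, so this bound is too weak to match $W_+$ on any right lateral curve. (Incidentally, the free boundary of $W_+$ lies at $-\ell_+^{(\epsilon)}t^{1/(\alpha(1-\beta))}$ with $\ell_+^{(\epsilon)}<\ell_*<\ell$, hence to the \emph{right} of $\eta_\ell(t)$, not to the left.)

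The paper does not attempt direct barriers here; it invokes Lemma~8 of \cite{Abdulla35}, whose method is nonlinear rescaling, exactly as in the proof of Lemma~\ref{lemma3.2}. One sets $u_k(x,t)=k\,u(k^{-1/\alpha}x,\,k^{\beta-1}t)$, checks that $u_k$ solves
\[
(u_k)_t-k^{-[\alpha(p-1-\beta)-p]/\alpha}\bigl(|(u_k)_x|^{p-2}(u_k)_x\bigr)_x+bu_k^\beta=0,
\]
with initial data converging to $(C\pm\epsilon)(-x)_+^\alpha$, and observes that the diffusion coefficient tends to $0$ precisely because $\alpha(p-1-\beta)>p$. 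Compactness then forces $u_k\to\bar u$, and evaluating at $(-\ell,1)$ with $k=t^{1/(\beta-1)}$ gives \eqref{C11'} directly. This route sidesteps entirely the free-boundary blow-up that wrecks your supersolution.
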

 \begin{proof}[Proof of Lemma~\ref{lemma3.2}] 
 The proof for cases (a) and (b) coincides with the proof for case (a) and (b) with $b>0$ in Lemma \ref{lemma3.2} of \cite {Abdulla35}. The proof  for (c) coincides (with some modifications) with the proof for case (b) with $b<0$ in the Lemma 6 for \cite{Abdulla35}; namely, instead of zero boundary condition on the line $x=-x_{\epsilon}$ and $x=-k^{1/\alpha}x_{\epsilon}$ (see (3.17) and (3.18) in \cite{Abdulla35}) we take 
 \[u_{\pm \epsilon}(-x_{\epsilon},t)=u(-x_{\epsilon},t),~0\leq t\leq \delta,\]
 \[u_{k}^{\pm \epsilon}(-k^{\frac{1}{\alpha}}x_{\epsilon},t)=ku(-x_{\epsilon},k^{(\alpha(p-2)-p)/\alpha}t),~\; 0\leq t \leq k^{\frac{p-\alpha(p-2)}{\alpha}}\delta,\]
 which are used to imply (3.9) from \cite{Abdulla35}. Moreover, if $\beta>1$ then to prove uniform boundedness of the sequence $\{u_{k}^{\pm \epsilon}\}$ we choose 
 \[g(x,t)=(C+1) (1+x^2)^{\frac{\alpha}{2}}(1-\nu t)^{\frac{1}{1-\beta}},~x\in \mathbb{R},~0\leq t\leq t_0=\frac{\nu ^{-1}}{2},\]
 where $\nu,\; h_*$ are chosen as in \cite{Abdulla35} and 
 \[h(x)=(\beta-1)\alpha^{p-1}(C+1)^{p-2}(1+x^2)^{\frac{(\alpha-2)(p-1)-2-\alpha}{2}}x^{2}|x|^{p-2}\Big(\frac{1+x^2}{x^2}+(p-2)\frac{1+x^2}{|x|^{2}}+(\alpha-2)(p-1)\Big)\]
 Then, we have
\[L_{kg}\equiv g_t-\big(|g_x|^{p-2}g_x\big)_x+bk^{\frac{\alpha(p-\beta -1)-p}{\alpha}}g^{\beta}=(C+1)(\beta-1)^{-1}(1+x^2)^{\frac{\alpha}{2}}(1-\nu t)^{\frac{\beta}{1-\beta}}S,\]
where
\[S=\nu-h(x)+b(\beta-1)(C+1)^{\beta -1}k^{\frac{\alpha(p-\beta -1)-p}{\alpha}}(1+x^2)^{\frac{\alpha(\beta-1)}{2}}.\]
Let $R=b(\beta-1)(C+1)^{\beta -1}k^{\frac{\alpha(p-\beta -1)-p}{\alpha}}(1+x^2)^{\frac{\alpha(\beta-1)}{2}}.$
\begin{equation}\label{CP17''}S\geq 1+R~\text{in}~D_{0\epsilon}^{k}=D_{\epsilon}^{k}\cap \{0<t\leq t_{0}\},\end{equation}
where
\[R=O\Big(k^{p-2-p/\alpha}\Big)~~~\text{uniformly for}~~(x,t)\in D_{0\epsilon}^{k}~~\text{as}~~k\rightarrow +\infty.\]
 while if $\beta=1$ we take
 \[g=(C+1) \exp(\nu t)(1+x^2)^{\frac{\alpha}{2}}\]
 where
 \[\nu=1+\max_{x\in\mathbb{R}}h(x).\]
 \[h(x)=(C+1)^{p-2}\alpha^{p-1}(1+x^2)^{\frac{(\alpha-2)(p-1)-2-\alpha}{2}}x^{2}|x|^{p-2}\Big(\frac{1+x^2}{x^2}+(p-2)\frac{1+x^2}{|x|^{2}}+(\alpha-2)(p-1)\Big).\]
  Then, we have
 \[L_{kg}\equiv g_t-\big(|g_x|^{p-2}g_x\big)_x+bk^{\frac{\alpha(p-2)-p}{\alpha}}g=(C+1)(1+x^2)^{\frac{\alpha}{2}}\exp(\nu t)S,\]
 where \[S=\nu-h(x)+bk^{\frac{\alpha(p-2)-p}{\alpha}}.\]
 Let $R=bk^{\frac{\alpha(p-2)-p}{\alpha}}$. Since $\alpha(p-2)-p<0$, then $R\rightarrow 0$ as $k\rightarrow +\infty$. 
\[R=O\Big(k^{\frac{\alpha(p-2)-p}{\alpha}}\Big)~~~\text{uniformly for}~~(x,t)\in D_{0\epsilon}^{k}~~\text{as}~~k\rightarrow +\infty.\]
Moreover, we have for~~$0<\epsilon \ll 1$
\begin{subequations}
\begin{equation}\label{CP18a''} g(x,0)~~~~~\geq u_k^{\pm \epsilon}(x,0)~\text{for}~|x|\leq k^{1/\alpha}|x_{\epsilon}|,\end{equation}
\begin{equation}\label{CP18b''}g(\pm k^{1/\alpha}x_{\epsilon},t)\geq u_k^{\pm \epsilon}(\pm k^{1/\alpha}x_\epsilon,t)~\text{for}~0\leq t\leq t_0.\end{equation}
\end{subequations}
Hence, $\exists \ k_0=k_0(\alpha;p)$~such that for $\forall k\geq k_0$~the comparison theorem implies
\begin{equation}\label{CP19''}0 \leq u_k^{\pm \epsilon}(x,t)\leq g(x,t)~\text{in}~\bar D_{0\epsilon}^k.\end{equation}
Let~$ G$~ be an arbitrary fixed compact subset of 
\[P=\big\{(x,t):x\in \mathbb{R},~~~0<t\leq t_0\big\}.\]
We take~$k_0$~so large that~$G\subset D_{0\epsilon}^k$~for~$k\geq k_0.$~From \eqref{CP19''}, it follows that the sequences~$\{u_k^{\pm \epsilon}\}$,~$k\geq k_0 $,~are uniformly bounded in~$G$.~As before, from the results of \cite{dibe-sv, tsutsumi} it follows that the sequence of non-negative and locally bounded solutions $\{u_{k}^{\pm \epsilon}\}$ is locally uniformly H\"{o}lder continuous, and weakly pre-compact in $W_{loc}^{1,p}(\mathbb{R}\times(0,T))$. It follows that for some subsequence~$k'$~
\begin{equation}\label{CP20''}\underset{k'\rightarrow+\infty}\lim u_{k'}^{\pm \epsilon}(x,t)=v_{\pm \epsilon}(x,t),~~~~~~~(x,t)\in P.\end{equation}
Since $\alpha(p-1-\beta)-p<0,$ passing to limit as $k'\to +\infty$, from \eqref{supersub} for $u_{k'}^{\pm \epsilon}$ it follows that~$v_{\pm \epsilon}$~is a solution to the CP \eqref{C1}, \eqref{C2} with~$b=0, T=t_0, u_0=(C\pm \epsilon)(-x)_+^{\alpha}.$~From Lemma~\ref{lemma3.1},  the required estimation \eqref{C2'} follows.
\end{proof}

 \begin{proof}[Proof of Lemma~\ref{lemma3.3}] 
 The first assertion of the lemma is known when $p-1\geq 1$ (see Lemma \ref{lemma3.3} of \cite{Abdulla35}). The proof is similar if $\beta <p-1<1$. If we consider a function
  \begin{equation}\label{C8''}
 u_k(x,t)=ku(k^{-\frac{p-1-\beta}{p}}x,k^{\beta-1}t),\qquad k>0,
 \end{equation}
 It may easily be checked that this satisfies   \eqref{C1}, \eqref{C4}. Since under the conditions of the lemma there exists a unique global solution to   \eqref{C1}, \eqref{C4} we have 
 \begin{equation}\label{C9''}
u(x,t)=ku(k^{-\frac{p-1-\beta}{p}}x,k^{\beta-1}t),\qquad k>0.\end{equation}
 If we choose $k=t^{1/(1-\beta)}$ then  \eqref{C9''} implies then  \eqref{C4'} with $f_1(\zeta)=u(\zeta,1).$\\[.2cm]
 To prove the second assertion of the lemma, take an arbitrary $x_1<0$. Since $u$ is continuous, there exists $\delta_1>0$ such that 
\begin{subequations}
\begin{equation}\label{C10''a}
(C/2)(-x_1)^{p/(p-1-\beta)}\leq u(x_1,\delta) ~~\text{for}~ \delta \in [0,\delta_1]
\end{equation}
If $C\in (0,C_*)$ then we also choose $\delta_1>0$ such that 
\begin{equation}\label{C10''b}
u(x_1,\delta)<C_*(-x_1)^{p/(p-1-\beta)} ~~\text{for}~ \delta \in [0,\delta_1]
\end{equation}
\end{subequations}
 Choose $k=(t/\delta)^{1/(1-\beta)}$ in \eqref{C9''} and then taking 
 \[x=-\ell t^{(p-1-\beta)/p(1-\beta)},\qquad \ell=\ell(\delta)=x_1\delta^{-(p-1-\beta)/p(1-\beta)}, \; \delta \in (0,\delta_1]\]
 we obtain \eqref{C7'} with 
 \[ \ell_1= -x_1 \delta_1^{-(p-1-\beta)/p(1-\beta)},\qquad \lambda=\lambda(\delta)=\delta^{1/(\beta-1)}u(x_1,\delta), \; \delta \in (0,\delta_1].\]
 If $0<C<C_*,$ then  \eqref{C7''} follows from \eqref{C10''b}. Let $C>C_*,$ to prove that $f_1(0)=A_1>0$ it is enough to prove that there exists a $t_0>0$ such that 
\begin{equation}\label{C11''}
u(0,t_0)>0.
\end{equation}
 If $p\geq 2$, \eqref{C11''} is a known result (see Lemma \ref{lemma3.3} of \cite{Abdulla35}). To prove \eqref{C11''} when $\beta<p-1<1$,
Consider the function
\[g(x,t)=C_1(-x+t)_+^{p/(p-1-\beta)}\]
where $C_1\in(C_*,C)$. If $x<t$ we have 
 \[Lg=bg^{\beta}S,\qquad S=1-\big(\frac{C_1}{C_*}\big)^{p-1-\beta}+\frac{p}{b(p-1-\beta)}C_1^{1-\beta}(-x+t)^{(\beta(1-p)+1)/(p-1-\beta)}.\]
 we can choose $x_1<0$ and $t_1>0$ such that 
 \[S\leq0 \qquad \text{if}~~ x_1\leq x \leq t,\;~~0\leq t \leq t_1.\]
 Since $u$ is continuous, we can also choose $t_1>0$ sufficiently small that
\[g(x_1,t)\leq u(x_1,t) ~~\text{for}~~0\leq t\leq t_1.\]
Moreover
 \[g(x,0)\leq u_0(x) ~~\text{for}~~x\geq x_1\]
 Applying comparison Lemma~\ref{lemma1} we have 
 
 \[u(x,t)\geq g(x,t) \qquad\text{for}~~x\geq x_1,\;0\leq t\leq t_1,\]
 which implies \eqref{C11''}. The lemma is proved.\end{proof}
 
Lemma \ref{lemma3.4} may be proved by localization of the proof given in Lemma \ref{lemma3.3}.%, exactly as similar local results were proved in Lemma 5 of \cite{Abdulla35}.
 The proof of Lemma \ref{lemma3.5} coincides with the proof of Lemma 8 from \cite{Abdulla35}.

\section{Proofs of the Main Results}\label{sec: proofs of the main results.}
\begin{proof}[Proof of Theorem~\ref{diffusiondominates'}]
The asymptotic estimations \eqref{C1'} and \eqref{C2'} follow from Lemma \ref{lemma3.2}. Take an arbitrary sufficiently small number $\epsilon>0$; from \eqref{C1'} it follows that there exists a number $\delta_1=\delta_1(\epsilon)>0$ such that
 \begin{equation}\label{C1'''}
(A_0-\epsilon)t^{\frac{\alpha}{p-\alpha (p-2)}}\leq u(0,t)\leq (A_0+\epsilon)t^{\frac{\alpha}{p-\alpha (p-2)}},~~~~0\leq t\leq \delta_1,
\end{equation}
where $A_0=f(0)>0.$ Consider a function 
\begin{equation}\label{C2'''}
g(x,t)=t^{1/(1-\beta)}f_1(\zeta), ~~~\zeta=xt^{-(p-1-\beta)/p(1-\beta)}.
\end{equation}
We have 
\begin{subequations}\label{C3'''ab}
\begin{equation}\label{C3'''a}
Lg=t^{\frac{\beta}{1-\beta}}\mathcal{L}^0f_1,
\end{equation}
\begin{equation}\label{C3'''b}
\mathcal{L}^0f_1=\frac{1}{1-\beta}f_1(\zeta)-\frac{p-1-\beta}{p(1-\beta)}\zeta f'_1(\zeta)-\big(|f'_1(\zeta)|^{p-2}f'_1(\zeta)\big)'+bf_1^{\beta}.
\end{equation}
 \end{subequations}
For the function $f_1$, we take 
\[f_1(\zeta)=C_0(\zeta_0-\zeta)_+^{p/(p-1-\beta)},~0<\zeta<+\infty\]
where $~C_0,\zeta_0~$are some positive constants. From \eqref{C3'''b}, we then have 
\begin{equation}\label{C4'''}
\mathcal{L}^0f_1=bC_0^{\beta}(\zeta_0-\zeta)_+^{\frac{p\beta}{p-1-\beta}}\Big\{1-\Big(\frac{C_0}{C_*}\Big)^{p-1-\beta}+ \frac{C_0^{1-\beta}}{b(1-\beta)}\zeta_0(\zeta_0-\zeta)_+^{\frac{\beta(1-p)+1}{p-1-\beta}}\Big\}.
\end{equation}
To prove a lower estimation, we take  $C_0=C_1,\zeta_0=\zeta_1$ (see Appendix). Then we have 
\begin{equation}\label{C5'''}
\mathcal{L}^0f_1\leq bC_1^{\beta}(\zeta_1-\zeta)_+^{\frac{p\beta}{p-1-\beta}}\Big\{1-\Big(\frac{C_1}{C_*}\Big)^{p-1-\beta}+ \frac{C_1^{1-\beta}}{b(1-\beta)}\zeta_1^{\frac{p(1-\beta)}{p-1-\beta}}\Big\}=0.
\end{equation}
From  \eqref{C3'''ab}, it follows that 
\begin{subequations}\label{C6'''ab}
\begin{equation}\label{C6'''a}
Lg\leq 0\quad \text{for}~0<x<\zeta_1t^{\frac{p-1-\beta}{p(1-\beta)}},\; 0<t<+\infty,
\end{equation}
 \begin{equation}\label{C6'''b}
Lg=0\quad \text{for}~x>\zeta_1t^{\frac{p-1-\beta}{p(1-\beta)}},\; 0<t<+\infty.
\end{equation}
 \end{subequations}
Lemma~\ref{lemma1} implies that $g$ is a subsolution of \eqref{C1} in $\{(x,t):x>0, t>0\}.$
Since $1/(1-\beta)>\alpha/(p-\alpha(p-2)),$ it follows from \eqref{C1'''} that there exists a $\delta_2>0,$ which does not depend on $\epsilon,$ such that  
\begin{subequations}\label{C7'''ab}
 \begin{equation}\label{C7'''a}
 g(0,t)\leq u(0,t)\quad \text{for}\; 0\leq t\leq\delta_2.
 \end{equation}
We also have  
\begin{equation}\label{C7'''b}
 g(x,0)=u(x,0)=0\quad \text{for}\; 0\leq x<+\infty.
\end{equation} \end{subequations}
Now we can fix a particular value of $\epsilon=\epsilon_0$ and take $\delta=\min(\delta_1,\delta_2).$ 
From \eqref{C6'''ab}, \eqref{C7'''ab} and Lemma~\ref{lemma1}, the left-hand sides of \eqref{C3'a}, \eqref{C3'b} follow. To prove an upper estimation, we first use the rough estimation \eqref{C16'}.
%\begin{equation}\label{C8'''}
%u(x,t)\leq D t^{1/(2-p)}x^{p/(p-2)},~\; 0<x<+\infty,\; 0<t<+\infty.
%\end{equation}
The estimation \eqref{C16'} is obvious, since by the comparison theorem, $u(x,t)$ may be upper estimated by the solution of \eqref{C1} with $b=0$. Using \eqref{C16'}, we can now establish a more accurate estimation. For that, consider a function $g$ with $C_0=C_*,\;\zeta_0=\zeta_2$ in $G_{\ell_0,\delta}$, where
\[G_{\ell_0,\delta}=\{(x,t):\zeta_{\ell_0}(t)=\ell_0 t^{(p-1-\beta)/p(1-\beta)}<x<+\infty,\; 0<t\leq \delta\}.\]
From \eqref{C3'''ab},\eqref{C4'''} it follows that
 \begin{subequations}\label{C9'''ab}
\begin{equation}\label{C9'''a}
Lg\geq 0\quad \text{for}~0<x<\zeta_2t^{\frac{p-1-\beta}{p(1-\beta)}},\; 0<t<+\infty,
\end{equation}
 \begin{equation}\label{C9'''b}
Lg=0\quad \text{for}~x>\zeta_2t^{\frac{p-1-\beta}{p(1-\beta)}},\; 0<t<+\infty.
\end{equation}
 \end{subequations}
Moreover, from \eqref{C16'} we have
\begin{equation}\label{C10'''}
u(\zeta_{\ell_0}(t),t)\leq D \ell_0^{p/(p-2)}t^{1/(1-\beta)}=C_*(\zeta_2-\ell_0)_+^{p/(p-1-\beta)}t^{1/(1-\beta)}=g(\zeta_{\ell_0}(t),t)\quad \text{for}~ 0\leq t\leq\delta.\end{equation}
By applying Lemma~\ref{lemma1} in  $G_{\ell_0,\delta}$, the right hand side of  \eqref{C3'b} follows from \eqref{C9'''ab},  \eqref{C10'''}  and  \eqref{C7'''b}. 

If $u_0$ is defined as in \eqref{C4}, then the CP \eqref{C1}, \eqref{C4} has a global solution and from comparison theorem it follows that the solution may be globally upper estimated by the solution to the CP \eqref{C1}, \eqref{C4} with $b=0$. Hence \eqref{C10'''} and the right-hand side of  \eqref{C3'a} are valid for $0<t<+\infty.$ \end{proof}
%%%%%%%%%%%%%%%%%%%%%%%%%%%%%%%%%%%%%%%%%%%%%%%%%%%%%%%%%%

\begin{proof}[Proof of Theorem~\ref{diffusionbalance'}]
First, assume that $u_0$ is defined by \eqref{C4}. 
The self-similar form \eqref{C4'} follows from  Lemma \ref{lemma3.3}. The proof  of the estimation  \eqref{C6'a} when $C>C_*$ and the proof of the right-hand side of \eqref{C6'a} 
when $0<C<C_*$ (and of the corresponding local ones when $u_0$ satisfies \eqref{C3}) fully coincides with the proof given in \cite{Abdulla35} for the case $1<(p-1)<\beta^{-1}$ (see (2.16) and (2.19) in \cite{Abdulla35}). To prove the left-hand side of \eqref{C6'a}, consider a function $g$ from \eqref{C2'''} with
\[f_1(\zeta)=C_*(-\zeta_5-\zeta)_+^{p/(p-1-\beta)},\qquad -\infty<\zeta<+\infty\]
From \eqref{C3'''ab},\eqref{C4'''} it follows that
\begin{equation}\label{C11'''}
Lg\leq 0\quad \text{in}~G_{-\ell_1,\infty}
\end{equation}
Moreover, we have
\begin{subequations}\label{C12'''ab}
\begin{equation*}
u(-\ell_1 t^{(p-1-\beta)/(p(1-\beta))},t)= \lambda t^{1/(1-\beta)}=C_*(\ell_1-\zeta_5)_+^{p/(p-1-\beta)}t^{1/(1-\beta)}\end{equation*}
\begin{equation}\label{C12'''a}
 =g(-\ell_1 t^{(p-1-\beta)/p(1-\beta)},t),~\qquad 0\leq t<+\infty,
\end{equation}
\begin{equation}\label{C12'''b}
 u(x,0)=g(x,0)=0,~\qquad 0\leq x\leq x_0,
\end{equation}
\begin{equation}\label{C12'''c}
u(x_0,t)=g(x_0,t)=0,~\qquad 0\leq t<+\infty,
\end{equation}
\end{subequations}
where $x_0>0$ is an arbitrary fixed number. By using \eqref{C11'''} and \eqref{C12'''ab}, we can apply Lemma~\ref{lemma1} in
\[G'_{-\ell_1,\infty}=G_{-\ell_1,\infty}\cap \{x<x_0\}.\]
Since $x_0>0$ is an arbitrary number, the desired lower estimation from \eqref{C6'a} follows .

Suppose that $u_0$ satisfies \eqref{C3} with $\alpha=p/(p-1-\beta),\; 0<C<C_*.$ Then from \eqref{C12''}, it follows that for an arbitrary sufficiently small $\epsilon>0$ there exists a number $\delta=\delta(\epsilon)>0$ such that 
\begin{equation*}
 (\lambda-\epsilon) t^{1/(1-\beta)}\leq u(-\ell_1 t^{(p-1-\beta)/(p(1-\beta))},t)\leq (\lambda+\epsilon) t^{1/(1-\beta)},\qquad 0\leq t\leq \delta.\end{equation*}
Using this estimation, the left-hand side of \eqref{C6'a} may be established locally in time. The proof completely coincides with the proof given above for the global estimations, except that $\lambda$ should be replaced by $\lambda-\epsilon.$ \eqref{C4'} and \eqref{C6'a} easily imply \eqref{C5'} and \eqref{C6'b}.
\end{proof}

\begin{proof}[Proof of Theorem~\ref{reactiondominates'}]
The asymptotic estimation  \eqref{C11'} follows from Lemma \ref{lemma3.5}. The proof of the asymptotic estimation \eqref{C10'} coincides with the proof given in \cite{Abdulla35}. In particular, the estimations (4.19) and (4.20) from \cite{Abdulla35} are true in this case as well. \end{proof}

\begin{proof}[Proof of Theorem~\ref{infinitespeed}]
The asymptotic estimations \eqref{C1'} and  \eqref{C2'} follow from Lemma \ref{lemma3.2}. From \eqref{C1'}, \eqref{C1'''} follows, where we fix a particular value of $\epsilon=\epsilon_0$. %For arbitrary given $M>0$, the function $g_M(x,t)=t^{1/(2-p)}\phi_M(x)$ is a solution of \eqref{C1}. Since $1/(2-p)> \alpha/(p+\alpha(2-p))$, there exists $\delta=\delta(M)>0$ such that 
The function $g(x,t)=t^{1/(2-p)}\phi(x)$ is a solution of \eqref{C1}. Since $1/(2-p)> \alpha/(p+\alpha(2-p))$, there exists $\delta>0$ such that 
 %\[u(0,t)=A_0t^{\frac{\alpha}{p+\alpha (2-p)}}\geq M t^{\frac{1}{2-p}}= \phi_M(0) t^{\frac{1}{2-p}}=g_M(0,t),\qquad 0\leq t \leq \delta.\]
 \[u(0,t)=A_0t^{\frac{\alpha}{p+\alpha (2-p)}}\geq t^{\frac{1}{2-p}}= \phi(0) t^{\frac{1}{2-p}}=g(0,t),\qquad 0\leq t \leq \delta.\]
% \[u(x,0)=g_M(x,0)=0,\qquad 0\leq x<\infty\]
 \[u(x,0)=g(x,0)=0,\qquad 0\leq x<\infty\]
 Therefore, from Lemma~\ref{lemma1}, the left-hand side of \eqref{C12'} follows. Let us prove the right-hand side of \eqref{C12'}. As it was mentioned in Section \ref{sec: description of the main results}, the right-hand side of \eqref{C12'} is valid for $0<t<+\infty$ if the initial data $u_0$ from \eqref{C2} vanishes for $x\geq 0$. %Take an arbitrary $M>0$ and consider a function 
For all $\epsilon>0$ and consider a function 
 %\[g^M(x,t)=(t+M^{-\gamma})^{1/(2-p)}\phi_M(x), \qquad \gamma\in (0,2-p)\]
  \[g_\epsilon(x,t)=(t+\epsilon)^{1/(2-p)}\phi(x),\]
 %\[g^M(0,t)=(t+M^{-\gamma})^{1/(2-p)}\phi_M(0)=(t+M^{-\gamma})^{1/(2-p)}M\geq M^{1-\gamma/(2-p)}\geq\]
  %\[\geq A_0 t^{\frac{\alpha}{p+\alpha (2-p)}}=u(0,t),\quad \text{for}\;0\leq t\leq T(M)= \big[A_0^{-1} M^{1-\gamma/(2-p)}\big]^{\frac{p+\alpha (2-p)}{\alpha}},\]
   \[g_\epsilon(0,t)=(t+\epsilon)^{1/(2-p)}\phi(0)=(t+\epsilon)^{1/(2-p)}\geq\epsilon^{1/(2-p)}\geq\]
   \[\geq \big(A_0+\epsilon\big) t^{\frac{\alpha}{p+\alpha (2-p)}}=u(0,t),\;\;\text{for}\;0\leq t\leq \delta_\epsilon= \big[(A_0+\epsilon\big)^{-1}\epsilon^{1/(2-p)}\big]^{\frac{p+\alpha (2-p)}{\alpha}},\]
  Due to continuity of $g_\epsilon$ and $u$, $\exists~\delta_{1\epsilon}>0$ such that $g_\epsilon(0,t)\geq u(0,t)$.
 %where $T(M)\rightarrow +\infty $ as $M\rightarrow +\infty$. Since $g^M$ is a solution of Eq. \eqref{C1}, from the Comparison Theorem 2.4 of \cite{Abdulla3} it follows that
  Since $g_\epsilon$ is a solution of \eqref{C1}, from the Lemma~\ref{lemma1} it follows that
   %\begin{equation}\label{a'bC13'}u(x,t)\leq g^M(x,t)=(t+M^{-\gamma})^{1/(2-p)}\phi_M(x),\quad \text{for}\;0\leq x<+\infty,\;0\leq t\leq T(M).\end{equation}
      \begin{equation}\label{a'bC13'}u(x,t)\leq g_\epsilon(x,t)=(t+\epsilon)^{1/(2-p)}\phi(x),\quad \text{for}\;0\leq x<+\infty,\;0\leq t\leq \delta_\epsilon.\end{equation}
Integration of \eqref{C13'ab} implies \eqref{integral1}. Global estimation \eqref{phiglobal} \eqref{phiglobal1} \eqref{phiglobal2} \eqref{phiglobal3}  
By rescaling $x\to \epsilon^{-1}x, \epsilon>0$ from  \eqref{integral1} we have
\[\frac{x}{\epsilon}=\int_{\phi(\frac{x}{\epsilon})}^{1}y^{-1}\big[\frac{b}{p-1}+\frac{p}{2(p-1)(2-p)}y^{2-p}\big]^{-1/p}dy.\]
Change of variable $z=-\epsilon \log y$ implies
\begin{equation}\label{Fepsilon}
x=\mathcal{F}[\Phi_\epsilon(x)],
\end{equation}
where
%\[\frac{x}{\epsilon}=-\frac{1}{\epsilon}\int_{-\epsilon \log\phi(\frac{x}{\epsilon})}^{0}e^{\frac{z}{\epsilon}}\big[\frac{b}{p-1}+\frac{p}{2(p-1)(2-p)}e^{\frac{(p-2)}{\epsilon}z}\big]^{-1/p}e^{-\frac{z}{\epsilon}}dz\]
%$\Rightarrow$
\[\mathcal{F}(y)=\int^{y}_{0}\big[\frac{b}{p-1}+\frac{p}{2(p-1)(2-p)}e^{\frac{(p-2)}{\epsilon}z}\big]^{-1/p}dz, \]
\[ \Phi_\epsilon(x)=-\epsilon \log\phi(\frac{x}{\epsilon}).\]
From \eqref{Fepsilon} it follows that
\begin{equation}\label{Fepsilon1}
\Phi_\epsilon(x)=\mathcal{F}^{-1}(x),
\end{equation}
where $\mathcal{F}^{-1}$ is an inverse function of $\mathcal{F}$. Since $1<p<2$ it easily follows that
\begin{equation}\label{Fepsilon2} 
\lim_{\epsilon \rightarrow 0} \mathcal{F}(y)=\Big(\frac{b}{p-1}\Big)^{-1/p}y, \ \lim_{\epsilon \rightarrow 0} \mathcal{F}^{-1}(y)=\Big(\frac{b}{p-1}\Big)^{1/p}y,
\end{equation}
for $y\geq 0$ and convergence is uniform in bounded subsets of $\mathbb{R}^+$. From \eqref{Fepsilon1}, \eqref{Fepsilon2} it follows that
\begin{equation}\label{Fepsilon3}
-\lim_{\epsilon \rightarrow 0}\epsilon \log\phi\Big(\frac{x}{\epsilon}\Big)=\Big(\frac{b}{p-1}\Big)^{1/p}x,\quad 0<x<+\infty.
\end{equation}
By letting $y=x/\epsilon$ from \eqref{Fepsilon3}, \eqref{logasymp'.} follows. Global estimation \eqref{phiglobal}, and accordingly also \eqref{phiglobal2} \eqref{phiglobal3} easily follow from
 \eqref{integral1}, \eqref{integral2}.
%%%%%%%%%%%%%%%%%%%
%%%%%%%%%%%%%%%%%%%%%%%%%%%%%%%%%%%%%%%%%%%%%%
\end{proof}

\begin{proof}[Proof of Theorem~\ref{infinitespeed'}]
Let either $b>0,\; \beta>p-1$  or $b<0,\;\beta \geq 1$. The asymptotic estimations \eqref{C1'} and  \eqref{C2'} follow from Lemma \ref{lemma3.2}. Take an arbitrary sufficiently small number $\epsilon>0$. From \eqref{C1'}, it follows that there exists a number $\delta_1=\delta_1(\epsilon)>0$ such that  \eqref{C1'''} is valid. Let $\beta \geq 1$.

 Consider a function
\begin{equation}\label{C13'''}
g(x,t)=t^{\alpha/(p+\alpha(2-p))}f(\xi), ~~~\xi=xt^{-1/(p+\alpha(2-p))}.
\end{equation}
We have 
\begin{subequations}\label{C14'''ab}
\begin{equation}\label{C14'''a}
Lg=t^{(\alpha(p-1)-p)/(p+\alpha(2-p))}\mathcal{L}_tf\end{equation}
\begin{equation}\label{C14'''b}
\mathcal{L}_tf=\frac{\alpha}{p+\alpha(2-p)}f-\frac{1}{p+\alpha(2-p)}\xi f'-\big(|f'|^{p-2}f'\big)'+bt^{(p-\alpha(p-1-\beta))/(p-\alpha(p-2))}f^{\beta}.\end{equation}
\end{subequations}
As a function $f$ we take 
\begin{equation}\label{C15'''}
f(\xi)=C_0(\xi_0+\xi)^{-\gamma_0},\qquad 0\leq \xi<+\infty\end{equation}
where $C_0$,\;$\xi_0$,\;$\gamma_0$ are some positive constants. Taking $\gamma_0=p/(2-p)$ from \eqref{C14'''b} we have
\begin{subequations}\label{C16'''ab}
\begin{equation*}
\mathcal{L}_tf=(p+\alpha(2-p))^{-1}C_0(\xi_0+\xi)^{\frac{p}{p-2}}\end{equation*}
\begin{equation}\label{C16'''a}
\times\Big[R(\xi)+bt^{(p-\alpha(p-1-\beta))/(p-\alpha(p-2))}(p+\alpha(2-p))C_0^{\beta-1}(\xi_0+\xi)^{\frac{p(1-\beta)}{2-p}}\Big]\end{equation}
\begin{equation}\label{C16'''b}
R(\xi)=[\alpha-2(p-1)p^{p-1}(p+\alpha(2-p)) (2-p)^{-p} C_0^{p-2}+p(2-p)^{-1}\xi (\xi_0+\xi)^{-1}].
\end{equation}
\end{subequations}
To prove an upper estimation, we take $C_0=C_6,\;\xi_0=\xi_2$ (see Appendix). Then we have
\begin{equation}\label{C17'''}
 R(\xi)\geq \alpha (\mu_b-1)\mu_b^{-1}
\end{equation}
From \eqref{C16'''ab}, \eqref{C17'''}  it follows that 
\[\mathcal{L}_tf\geq 0 \quad \text{for}~\xi\geq 0, ~~0\leq t\leq \delta_2,\]
where 
\[\delta_2=\delta_1\quad \text{if}~b>0,\qquad \delta_2=\min(\delta_1,\delta_3)\quad \text{if}~b<0\]
and 
\[\delta_3=\big[\alpha\epsilon(A_0+\epsilon)^{1-\beta}\big(-b(p+\alpha(2-p))(1+\epsilon)\big)^{-1}\big]^{(p+\alpha(2-p))/(p+\alpha(\beta+1-p))}\]
Hence, from \eqref{C14'''ab} we have
\begin{equation}\label{C18'''}
Lg\geq 0\qquad \text{for}~~0\leq x<+\infty,~~0<t\leq \delta_2.
\end{equation}
From \eqref{C1'''} and Lemma~\ref{lemma1}, the right-hand side of \eqref{C15'} follows with $\delta=\delta_2$. To prove a lower estimation in this case, we take $C_0=C_5,\;\xi_0=\xi_1$. If $b>0$ and  $\beta<2/p$ we derive from \eqref{C16'''ab} that 
\begin{subequations}
\begin{equation*}
R(\xi)\leq\alpha+p(2-p)^{-1}-2(p-1)p^{p-1}(p+\alpha(2-p)) (p-2)^{-p} C_5^{p-2}
\end{equation*}
\begin{equation}\label{C19'''a}
=-(p+\alpha(2-p))\big((2-p)(1-\epsilon)\big)^{-1}\epsilon\end{equation}
\begin{equation}\label{C19'''b}
\mathcal{L}_tf\leq 0\qquad \text{for}~~\xi\geq 0,\;0\leq t\leq \delta_4,
\end{equation}
\end{subequations}
where $\delta_4=\min(\delta_1,\delta_5)$ and
\[\delta_5=\Big[(A_0-\epsilon)^{1-\beta}\big(b(2-p)(1-\epsilon)\big)^{-1}\epsilon\Big]^{(p+\alpha(2-p))/(p+\alpha(\beta+1-p))}.\]
From \eqref{C14'''ab} it follows that
\begin{equation}\label{C20'''}
Lg\leq 0\qquad \text{for}~~0\leq x<+\infty,\;0< t\leq \delta_4.
\end{equation}
 If either $b>0,\; \beta\geq 2/p$ or $b<0,\; \beta \geq 1$, from \eqref{C16'''ab} we have
\begin{equation*}
\mathcal{L}_tf=(p+\alpha(2-p))^{-1}C_5(\xi_1+\xi)^{\frac{2}{p-2}}\end{equation*}
\begin{subequations}
\begin{equation}\label{C21'''a}
\times\Big[R_1(\xi)+bt^{(p-\alpha(p-1-\beta))/(p-\alpha(p-2))}(p+\alpha(2-p))C_5^{\beta-1}(\xi_1+\xi)^{\frac{(2-p\beta)}{2-p}}\Big]\end{equation}\\[.2cm]
\begin{equation*}
R_1(\xi)=[\alpha-2(p-1)p^{p-1}(p+\alpha(2-p)) (2-p)^{-p} C_5^{p-2}](\xi_1+\xi)+p(2-p)^{-1}\xi 
\end{equation*}
\begin{equation}\label{C21'''b}
=-p(2-p)^{-1}\xi_1, 
\end{equation}
\end{subequations}
which again imply  \eqref{C19'''b}, where $\delta_4=\delta_1$ if $b<0,\;\delta_4=\min(\delta_1,\delta_5)$ if 
$b>0$ and 
\[\delta_5=[p\big(b(p+\alpha(2-p))(2-p)\big)^{-1}(A_0-\epsilon)^{1-\beta}]^{(p+\alpha(2-p))/(p+\alpha(\beta+1-p))}.\]
As before  \eqref{C20'''} follows from  \eqref{C21'''b}. From  \eqref{C1'''},  and Lemma~\ref{lemma1}, the left-hand side of  \eqref{C15'} follows with $\delta=\delta_4$. Thus we have proved  \eqref{C16'} with~~$\delta=\min(\delta_2,\delta_4)$.

Let $b>0,\; \beta\geq 1$. The upper estimation of \eqref{C16'} is an easy consequence of Lemma~\ref{lemma1}, since the right-hand side of it is a solution of Eq.\eqref{C1} with $b=0$. Let $b>0$ and $\beta \geq 2/p$. Now we can fix a particular value of $\epsilon=\epsilon_0$ and take $\delta=\delta(\epsilon_0)>0$ in \eqref{C15'}. Then from the left-hand side of \eqref{C15'} and \eqref{C16'},  the asymptotic result \eqref{C17'} follows. However, if $b>0,\; 1\leq \beta<2/p,$ from  \eqref{C15'} and \eqref{C16'}
it follows that for $\forall$ fixed $t\in (0,\delta(\epsilon)]$
\begin{equation*}
D(1-\epsilon)^{1/(2-p)}\leq \underset{x\rightarrow +\infty}\liminf ut^{1/(p-2)}x^{\frac{p}{2-p}}\leq \underset{x\rightarrow +\infty}\limsup ut^{1/(p-2)}x^{\frac{p}{2-p}}\leq D,
\end{equation*}
which easily implies  \eqref{C18'} in view of arbitrariness of $\epsilon$.

We now let $b<0,\; \beta\geq 1$ and prove  \eqref{C19'}. Consider a function 
\[\bar{g}(x,t)=D(1-\epsilon)^{1/(p-2)}t^{1/(2-p)}x^{p/(p-2)}\]
in $G=\{(x,t):\mu t^{1/(p+\alpha(2-p))}<x<+\infty,~~0<t\leq \delta\}$, where  $\mu$ is defined as in  \eqref{C19'}. Let $g(x,t)=\bar{g}(x,t)$ for $(x,t)\in \bar{G}\backslash (0,0)$ and $g(0,0)=0$. We have 
\[Lg=D(2-p)^{-1}(1-\epsilon)^{(1/(p-2)}t^{(p-1)/(2-p)}x^{p/(p-2)} \mathcal{G}\quad \text{in} ~G\]
\[\mathcal{G}=\epsilon+b(2-p)D^{\beta-1}(1-\epsilon)^{(\beta-1)/(p-2)}t^{(\beta+1-p)/(2-p)}x^{p(\beta-1)/(p-2)}.\]
We then derive
\[\mathcal{G}\geq\epsilon+b(2-p)D^{\beta-1}(1-\epsilon)^{(\beta-1)/(p-2)}\mu^{p(\beta-1)/(p-2)}t^{(p+\alpha(\beta+1-p))/(p+\alpha(2-p))}\quad \text{in} ~G.\]
Hence,
\begin{equation*}
\mathcal{G}\geq 0\quad \text{in} ~G,\quad\text{for}~~\delta\in(0,\delta_0]
\end{equation*}
\begin{equation*}
\delta_0=\Big[\big(-b(2-p)\big)^{-1}D^{1-\beta}(1-\epsilon)^{(1-\beta)/(p-2)}\mu^{p(1-\beta)/(p-2)}\epsilon\Big]^{(p+\alpha(2-p))/(p+\alpha(\beta+1-p))},
\end{equation*}
which implies 
\begin{subequations}\label{C22'''ab}
\begin{equation}\label{C22'''a}
Lg\geq 0\quad \text{in} ~G.\end{equation}
Moreover, we have 
\[g|_{x=\mu t^{1/(p+\alpha(2-p))}}=(A_0+\epsilon)t^{\alpha/(p+\alpha(2-p)}\quad \text{for}~~0\leq t\leq \delta.\]
From\eqref{C15'}, it follows that 
 \[ u|_{x=\mu t^{1/(p+\alpha(2-p))}}\leq C_6(\xi_2+\mu)^{\frac{p}{p-2}}t^{\alpha/(p+\alpha(2-p))} \]
 \[\leq (A_0+\epsilon)t^{\alpha/(p+\alpha(2-p)}\quad \text{for}~~ 0\leq t\leq \delta.\]
Therefore, we have
\begin{equation}\label{C22'''b} 
g\geq u\qquad \text{on}~~\bar{G}~\backslash ~G,\end{equation}
\end{subequations}
From \eqref{C22'''ab}, and Lemma~\ref{lemma1}, the desired estimation \eqref{C19'} follows. Since $\epsilon>0$ is arbitrary, from the left-hand side of \eqref{C15'} and \eqref{C19'} the asymptotic result  \eqref{C17'} follows as before.

Let $b>0,\; p-1<\beta<1$. The left-hand side of \eqref{C20'} may be proved as the left-hand side of \eqref{C3'a} was earlier. The only difference is that we take $f_1(\zeta)=C_*(1-\epsilon)(\zeta_8+\zeta)_+^{p/(p-1-\beta)}$ in \eqref{C2'''},  \eqref{C3'''ab}. The right-hand side of \eqref{C20'} is almost trivial, since $C_*x^{p/(p-1-\beta)}$ is a stationary solution of Eq. \eqref{C1}. The important point in \eqref{C20'} is that $\delta>0$ does not depend on $\epsilon>0$. This is clear from the analysis involved in the proof of the similar estimation  \eqref{C3'a}. From \eqref{C20'}, it follows that $\forall$ fixed $t\in (0,\delta]$, we have
\[C_*(1-\epsilon)\leq \underset{x\rightarrow +\infty}\liminf ux^{p/(\beta+1-p)}\leq \underset{x\rightarrow +\infty}\limsup ux^{p/(\beta+1-p)}\leq C_* .\]
Since $\epsilon>0$ is arbitrary, \eqref{C21'} easily follows.

Assume now that $b=0$. First consider the case when $u_0$ is defined by \eqref{C4}. The self-similar form \eqref{C22'} and the formula \eqref{C2'}  follow from Lemma~\ref{lemma3.1}. To prove \eqref{C23'}, consider a function $g$ from \eqref{C13'''}, which satisfies  \eqref{C14'''ab} with $b=0$. As a function $f$ we take \eqref{C15'''} with $\gamma_0=p/(2-p)$. Then we drive  \eqref{C16'''ab} with $b=0$. To prove an upper estimation we take $C_0=C_7,\; \xi_0=\xi_4$ and from \eqref{C16'''b} we have 
\begin{equation*}
R(\xi)\geq\big[\alpha-2p^{p-1}(p-1)(p+\alpha(2-p))(2-p)^{-p}C_7^{p-2}\big]=0,
\end{equation*}
which implies  \eqref{C18'''} with $\delta_2=+\infty.$ As before, from \eqref{C18'''} and Lemma~\ref{lemma1}, the right-hand side of \eqref{C23'}  follows. The left-hand side of \eqref{C23'} may be established similarly if we take $ C_0=D,\xi_0=\xi_3$.
To prove the estimation \eqref{C16'}, consider 
\[g_{\mu}(x,t)=D(t+\mu)^{1/(2-p)}(x+\mu)^{p/(p-2)},\qquad \mu>0,\]
which is a solution of Eq.\eqref{C1} for $x>0,~t>0$. Since 
\[g_{\mu}(0,t)\geq D\mu^{(p-1)/(p-2)}\geq u(0,t)\quad \text{for}~~ 0\leq t\leq T(\mu)=[DA_0^{-1}\mu^{(p-1)/(p-2)}]^{(p+\alpha(2-p))/\alpha},\]
the comparison Lemma~\ref{lemma1} implies 
\[u(x,t)\leq g_{\mu}(x,t)\qquad \text{for}~~ 0<x<+\infty,~0\leq t\leq T(\mu).\]
In the limit as $\mu\rightarrow 0+$, we can easily derive \eqref{C16'}. 
Finally, from \eqref{C23'} and \eqref{C16'} it easily follows that for an arbitrary fixed $0<t<+\infty$, the asymptotic formula \eqref{C17'} is valid.  If $u_0$ satisfies \eqref{C3} with $\alpha>0$, then \eqref{C1'} and \eqref{C1'''} follow from Lemma \ref{lemma3.1}. Similarly, we can then prove that for an arbitrary sufficiently small $\epsilon >0$ there exists a $\delta=\delta(\epsilon)>0$ such that \eqref{C23'} is valid for $0\leq t \leq \delta(\epsilon)$, except that in the left-hand side  (respectively in the right-hand side) of \eqref{C23'} the constant $A_0$ is replaced by $A_0-\epsilon$ (respectively by  $A_0+\epsilon$ ). Then we can fix a particular value of $\epsilon=\epsilon_0$ and let $\delta=\delta(\epsilon_0)>0$. Obviously, from the local analog of \eqref{C23'}  and \eqref{C16'} it follows that, for arbitrary fixed $t\in (0,\delta]$, the asymptotic formula \eqref{C17'} is valid. \end{proof}

\section{Conclusion}
This paper presents a full classification of the short-time behavior of the interfaces and local solutions near the interfaces or at infinity in the Cauchy problem for the 
nonlinear parabolic $p$-Laplacian type reaction-diffusion equation of non-Newtonian elastic filtration in the fast diffusion regime:
\[ u_t=\Big(|u_x|^{p-2}u_x\Big)_x-bu^{\beta}=0, \ x\in \mathbb{R}, 0<t<T, \ 1<p<2, \beta >0; \ u(x,0)\sim C(-x)_+^\alpha,~~\text{as} \ x\to 0^-, \]
and either $b\geq 0$ or $b<0,\beta\geq 1$.
The classification is based on the relative strength of the diffusion and absorption forces. The following are the main results:
\begin{itemize}
\item If $b>0, 0<\beta<p-1,\; 0<\alpha<p/(p-1-\beta)$, then diffusion weakly dominates over the absorption and the interface expands with asymptotics
\[ \eta(t)\sim \gamma(C,p,\alpha)t^{(p-1-\beta)/p(1-\beta)} \quad\text{as} \ t\to 0^+. \]
\item If ~$b>0, 0<\beta <p-1,\ \alpha=p/(p-1-\beta)$, then diffusion and absorption are in balance, and there is a critical value $C_*$ such that the interface expands or shrinks accordingly as $C>C_*$ or $C<C_*$ and
\[ \eta(t) \sim\zeta_*(C,p) t^{(p-1-\beta)/p(1-\beta)},\qquad \text{as} \ t\rightarrow 0^+,\]
where $\zeta_* \lessgtr 0$ if $C\lessgtr C_*$.
\item If $b>0, 0<\beta<p-1, \alpha >p/(p-1-\beta)$, then absorption strongly dominates over diffusion and the interface shrinks with asymptotics
\[
\eta(t)\sim -\ell_*(C,\alpha,p,\beta) t^{1/\alpha(1-\beta)}\qquad\text{as}~t\rightarrow 0^+,
\]
\item $b>0, 0<\beta=p-1<1, \alpha>0$, then domination of the diffusion over absorption is moderate, there is an infinite speed of propagation, and the solution has exponential decay at infinity.
\item If either $b>0,\beta>p-1$ or $b<0,\beta\geq 1$, then diffusion strongly dominates over the absorption, and the solution has power type decay at infinity independent of $\alpha >0$,
which coincides with the asymptotics of the fast diffusion equation ($b=0$).
\end{itemize}

\section*{Acknowledgment}
The authors thank anonymous referee for valuable comments and suggestions which improved the paper.
%Insert the Acknowledgment text here.

% can use a bibliography generated by BibTeX as a .bbl file
% BibTeX documentation can be easily obtained at:
% http://www.ctan.org/tex-archive/biblio/bibtex/contrib/doc/

%\bibliographystyle{imamat}
%\bibliography{sample}
%
% once the .bbl file has been generated then place the text in your article.

% To get the numbered reference style the author should use [numbib]
%as an option in the document class.  For example: \documentclass[numbib]{imamat}

\clearpage

\appendix

\section*{Appendix}
\label{app1}
We give here explicit values of the constants used in Section \ref{sec: description of the main results} in the outline of the results and later in Section \ref{sec: proofs of the main results.} during the proof of these results. \\
(1)   $0<\beta<p-1,\; 0<\alpha<p/(p-1-\beta)$

\[C_1=\big((1-\beta)/(2-p)\big)^{1/(p-1-\beta)}C_*\]
\[\zeta_1= b^{\frac{p-2}{p(1-\beta)}}\big(p^{p-1}(p-1)\big)^{1/p}(1+\beta)^{1/p}\big(p-1-\beta\big)^{\frac{\beta(p-1)-1}{p(1-\beta)}}\big((2-p)/(1-\beta)\big)^{(2-p)/p(1-\beta)},\]
 \[\zeta_2=b^{(p-2)/p(1-\beta)}(p-1)^{1/p}p^{(p-1)/p}(1+\beta)^{(2-p)/p(1-\beta)}2^{(p-1-\beta)/p(1-\beta)}(2-p)^{\frac{\beta(p-1)-1}{p(1-\beta}}(1-\beta)(p-1-\beta)^{-1}\]
         \[ \ell_0=\frac{p-1-\beta}{1-\beta}\zeta_2,\]
 (2)~$b>0,\;0<\beta<1,\; \beta <p-1<\beta^{-1},\; \alpha=p(p-1-\beta)^{-1}$\\
 
$\zeta_3=A_1^{\frac{p-2}{p}}\big((1-\beta)(1+\beta)p^{p-1}(p-1)\big)^{\frac{1}{p}}\big(1+b(1-\beta)A_1^{\beta-1}\big)^{-\frac{1}{p}}(p-1-\beta)^{-1},~~~$

$\zeta_4=\Big(A_1/C_*\Big)^{\frac{p-1-\beta}{p}},\qquad C_2=A_1 \zeta_3^{-\frac{p}{p-1-\beta}}$,

$\zeta_5=\ell_1-(\lambda/C_*)^{(p-1-\beta)/p}>0 ~~(\text{see Lemma \ref{lemma3.3} and } \eqref{C7''})$

$\ell_2=C^{\frac{1+\beta-p}{p}}\Big[b(1-\beta)(\delta_* \Gamma)^{-1}\Big((1-\delta_* \Gamma)-\big(1-\delta_* \Gamma\big)^{1-p}\Big(C/C_*\Big)^{p-1-\beta}\Big) \Big]^{\frac{p-1-\beta}{p(1-\beta)}}$,

$\zeta_6=\delta_* \Gamma \ell_2,~~~~~~~~\Gamma=1- (C/C_*)^{\frac{p-1-\beta}{p}},~~~~~~~~C_3=C \big(1-\delta_* \Gamma\big)^{\frac{p}{1+\beta-p}},$~
where ~$\delta_*\in(0,1)$~satisfies

$g(\delta_*)=\underset{[0;1]}\max g(\delta),\quad ~~~g(\delta)=\delta^{\frac{1+\beta(1-p)}{p(1-\beta)}}\Big[(1-\delta \Gamma)-\Big(C/C_*\Big)^{p-1-\beta}\big(1-\delta \Gamma\big)^{1-p}\Big) \Big],$\\\\
(5) $\beta>p-1$\\

$D=\Big[\frac{2(p-1)p^{p-1}}{(2-p)^{p-1}}\Big]^{1/(2-p)}$\\

$\xi_1=(A_0-\epsilon)^{(p-2)/p}(1-\epsilon)^{1/p}D^{(2-p)/p}$    if  $b>0,\; 1\leq \beta <2/p$,\\

$\xi_1= (A_0-\epsilon)^{(p-2)/p}D^{(2-p)/p}$ if either $b>0,\; \beta\geq 2/p$ or $b<0,\; \beta \geq 1$,\\

$C_5=(A_0-\epsilon)\xi_1^{p/(2-p)}$\\

$A_0=f(0)>0\qquad$(see \eqref{C2'} and lemma \ref{lemma3.1})\\

$\xi_2=(A_0+\epsilon)^{(p-2)/p}\big[\frac{2(p-1)p^{p-1}(p+\alpha(2-p))\mu_b}{ \alpha(2-p)^{p}}\big]^{1/p}$\\

$C_6=\big[\frac{2(p-1)p^{p-1}(p+\alpha(2-p))\mu_b}{ \alpha(2-p)^{p}}\big]^{1/(2-p)}$\\

$\mu_b=1$   if  $b>0$, \qquad $\mu_b=1+\epsilon$   if   $b<0$,\\

$\zeta_8=\Big[b(1-\beta)C_*^{\beta-1}(1-\epsilon)^{\beta-1}\big((1-\epsilon)^{p-1-\beta}-1\big)\Big]^{(p-1-\beta)/p(1-\beta)}$\\\\

$\xi_3=(A_0/D)^{(p-2)/p},\qquad \qquad\xi_4=\xi_3\big(1+p/\alpha(2-p)\big)^{1/p}$\\

$C_7=D\big[1+p/\alpha(2-p)\big]^{1/(2-p)}$\\
\end{document}